\numberwithin{equation}{section}
\newtheorem{mainthm}{Theorem}
\newtheorem{theo}{Theorem}[section]	
\newtheorem*{theo*}{Theorem}
\newaliascnt{lem}{theo}
\newtheorem{lem}[lem]{Lemma}
\newaliascnt{propo}{theo}
\newtheorem{propo}[propo]{Proposition}
\newaliascnt{corol}{theo}
\newtheorem{corol}[corol]{Corollary}
\newaliascnt{ques}{theo}
\newaliascnt{conj}{theo}
\newaliascnt{assumption}{theo}
\theoremstyle{remark}
\newaliascnt{rem}{theo}
\newaliascnt{exam}{theo}
\theoremstyle{definition}
\newaliascnt{defi}{theo}
\newaliascnt{nota}{theo}
\DeclareMathOperator{\irr}{Irr}
\DeclareMathOperator{\aut}{Aut}
\newcommand{\Span}{\operatorname{Span}}
\DeclareMathOperator{\spec}{Spec}
\renewcommand{\hom}{\mathrm{Hom}}
\newcommand{\res}{\mathrm{Res}}
\newcommand{\ind}{\mathrm{Ind}}
\newcommand{\Tr}{\mathrm{Tr}}
\newcommand{\id}{\boldsymbol{1}}
\newcommand{\Ql}{\underline{\widebar{\dbQ}_\ell}}
\newcommand{\Cen}{\ensuremath{\mathrm{C}}}
\renewcommand{\L}{\mcal{L}}
\newcommand{\dbN}{\mathbb N}
\newcommand{\dbZ}{\mathbb Z}
\newcommand{\dbF}{\mathbb F}
\newcommand{\dbQ}{\mathbb Q}
\newcommand{\dbR}{\mathbb R}
\newcommand{\dbC}{\mathbb C}
\newcommand{\kk}{k}
\newcommand{\frob}{\sigma}
\newcommand{\gen}[1]{\langle{#1}\rangle}
\newcommand{\set}[1]{\left\{{#1}\right\}}
\newcommand{\abs}[1]{\left|#1\right|}
\newcommand{\inner}[1]{\left(#1\right)}
\newcommand{\mcal}{\mathcal}
\newcommand{\mbf}{\mathbf}
\newcommand{\mfr}{\mathfrak}
\newcommand{\msf}{\mathsf}
\newcommand{\widebar}{\overline}
\renewcommand{\tilde}{\widetilde}
\newcommand{\family}{\mathbf{F}}
\newcommand{\mset}{\mfr{M}}
\title{Bounds on multiplicities of spherical spaces over finite fields - the general case}
\author{Shai Shechter}
\email{shai.shechter@weizmann.ac.il}
\address{Faculty of Mathematics and Computer Science,
	The Weizmann Institute of Science,
	234 Herzl Street, Rehovot 7610001 Israel}
\begin{document}
\begin{abstract}
	Let $G$ be a connected reductive group scheme acting on a spherical scheme $X$. In the case where $G$ is of type $A_n$, Aizenbud and Avni proved the existence of a number $C$ such that the multiplicity $\dim\hom(\rho,\dbC[X(F)])$  is bounded by $C$, for any finite field $F$ and any irreducible representation $\rho$ of $G(F)$. In this paper, we generalize this result to the case where $G$ is a connected reductive group scheme over $\dbZ$, and prove Conjecture~A of \cite{AizenbudAvni-Bounds}.
\end{abstract}
\maketitle

\section{Introduction}

Let $G$ be a reductive group over a field $\kk$. A normal $G$-variety $X$ is called spherical if for any Borel subgroup $B\subseteq G$, there is a dense $B$-orbit in $X$. In this paper we prove the following.

\begin{mainthm}[Aizenbud-Avni~{\cite[Conjecture~A]{AizenbudAvni-Bounds}}]\label{conj:aizenbud-avni} 
	Let $G$ be a smooth reductive group scheme over $\dbZ$ acting on a scheme $X$. Assume that, for every prime $p$, the variety $X_{\dbF_p}=X\times\spec(\dbF_p)$ is $G_{\dbF_p}$-spherical. Then there is a constant $C$ such that, for any finite field $F$ and every irreducible representation $\rho$ of $G(F)$, the multiplicity of $\rho$ in $\dbC[X(F)]$ is less than $C$.
\end{mainthm}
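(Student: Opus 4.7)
\emph{Proof plan.} The overall strategy is to reduce the theorem, via Frobenius reciprocity, to bounding $\dim\rho^{H(F)}$ uniformly for spherical subgroup schemes $H\subseteq G$, and then to attack this bound by induction on the semisimple rank of $G$ using Harish--Chandra theory, with the cuspidal case treated separately.

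\textbf{Reduction to the homogeneous case.} First I would stratify $X$ by $G$-orbits. Since $X$ is spherical and the data is defined over $\dbZ$, away from finitely many primes the orbit decomposition spreads out to a stratification of the $\dbZ$-scheme, with each stratum of the form $G/H_i$ for a subgroup scheme $H_i$ whose generic fiber is spherical. Over any finite field $F$, the set $X(F)$ decomposes into finitely many $G(F)$-orbits (the number bounded independently of $F$), and on each $G(F)$-orbit through a point $y$ one has, by Frobenius reciprocity,
\[
\dim\hom\bigl(\rho,\dbC[G(F)\cdot y]\bigr)=\dim\rho^{\Stab_{G(F)}(y)}.
\]
Thus it suffices to bound $\dim\rho^{H(F)}$ uniformly in $\rho$, $F$, and in the relevant spherical $H$.

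\textbf{Harish--Chandra reduction.} I would then induct on the semisimple rank of $G$. Every irreducible $\rho$ of $G(F)$ embeds into some parabolically induced representation $\ind_{P(F)}^{G(F)}\sigma$, where $P=LU$ is an $F$-rational parabolic and $\sigma$ is a cuspidal representation of $L(F)$. Mackey's formula then gives
\[
\dim\bigl(\ind_{P(F)}^{G(F)}\sigma\bigr)^{H(F)}=\sum_{\gamma}\dim\sigma^{\gamma^{-1}H(F)\gamma\cap P(F)},
\]
with $\gamma$ running over representatives of $H(F)\backslash G(F)/P(F)$. Sphericity of $G/H$ forces sphericity of $L/(H^\gamma\cap L)$ (via the local structure theorem for spherical varieties), and Brion's finiteness theorem bounds the number of $H$-orbits on $G/P$ uniformly; hence, when $L\subsetneq G$, the induction hypothesis bounds each summand.

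\textbf{The cuspidal base case.} The principal obstacle will be bounding $\dim\rho^{H(F)}$ when $\rho$ is cuspidal, so the Harish--Chandra reduction offers no further leverage. I would attack this via Deligne--Lusztig theory: realize $\rho$ as a constituent of $R_T^G(\theta)$ for an elliptic maximal torus $T\subseteq G$ and a character $\theta$, and use the Deligne--Lusztig character formula to express
\[
\dim\rho^{H(F)}=\frac{1}{\abs{H(F)}}\sum_{h\in H(F)}\langle\rho,\delta_h\rangle,
\]
then reorganize the sum using the decomposition of $H$ into semisimple/unipotent parts and the fixed point analysis of the Deligne--Lusztig variety. A uniform bound should emerge from the geometry of the wonderful compactification of $G/H$, whose boundary divisors control the intersections $T^g\cap H$ across all $g\in G(F)$; this geometric input is what allows the multiplicity bound to be made independent of $p$ and of the particular cuspidal datum $(T,\theta)$. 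Executing this step cleanly, uniformly in primes (including small residue characteristic), is where I expect the real work of the paper to lie.
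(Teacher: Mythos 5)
Your reduction to the homogeneous case and the Mackey/Harish--Chandra step are plausible in outline (modulo real issues you gloss over: rational-point discrepancies between the image of $H^\gamma(F)\cap P(F)$ in $L(F)$ and the $F$-points of the image group, uniformity of the resulting family of pairs $(L,M)$ as $\gamma$, $F$ and $p$ vary so that an induction hypothesis over $\dbZ$-models actually applies, and orbit-finiteness of spherical varieties in small positive characteristic). But by your own admission the whole weight of the argument lands on the cuspidal base case, and there your proposal contains no workable idea. Writing a cuspidal $\rho$ as a component of $R_T^G(\theta)$ for an elliptic $T$ does not help by itself: $R_T^G(\theta)$ is a \emph{virtual} character with signs, and one cannot recover an individual irreducible component of it with controlled coefficients from a single such datum; likewise $\dim\rho^{H(F)}=\abs{H(F)}^{-1}\sum_{h}\chi_\rho(h)$ is not bounded by any fixed-point count on a Deligne--Lusztig variety or by boundary divisors of a wonderful compactification, because individual character values $\chi_\rho(h)$ are not uniformly controlled and no such geometric bound for $T^g\cap H$ translates into a bound on this average. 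So the step you flag as ``where the real work lies'' is genuinely missing, not merely unexecuted.

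For comparison, the paper never reduces to $X=G/H$ and never runs Harish--Chandra induction. It proves directly, by the character-sheaf/weight argument of Aizenbud--Avni applied to the multiplicity complex, that $\abs{\inner{R_T^G(\theta),\chi_{\dbC[X(\kk)]}}_{G(\kk)}}\le c(G,X)$ for \emph{every} maximal torus $T$ and every $\theta$, uniformly in $\kk$ (\autoref{propo:bounds-induced-chars}). The missing mechanism in your plan is then supplied by Lusztig's theory: the Jordan decomposition $\mcal{E}(G(\kk),(s))\leftrightarrow\mcal{E}(\Cen_{G^*(\kk)}(s),(1))$, compatible with Deligne--Lusztig induction, reduces everything to unipotent characters, and the partition into families together with the non-abelian Fourier matrices shows that each unipotent character occurs with a \emph{positive} coefficient, drawn from a finite set determined by the root system alone, in an almost character $R_\phi=\abs{W}^{-1}\sum_{w}\phi(w\cdot\tau_\frob)R_w$, which is a combination of Deligne--Lusztig characters with coefficients of modulus less than $1$. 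This ``bounded positive expression of every irreducible (including every cuspidal) character in terms of the $R_T^G(\theta)$'' is exactly what your approach lacks, and without it, or a substitute of comparable strength, your induction terminates on a case it cannot handle.
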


From here on, we assume $\kk=\dbF_q$ is a finite field of characteristic $p$. We write $\frob$ for the Frobenius map associated to the $\kk$-structure of $G$, i.e. such that $G(\kk)=G^\frob$. 

Our proof is based on the observation that certain key aspects of \cite[Theorem~B]{AizenbudAvni-Bounds} and its proof carry over to the generality of $G$ a smooth reductive $\dbZ$-group scheme and $\rho$ a (possibly virtual) representation, afforded by a Deligne-Lusztig character, i.e.\ a character of the form $R_T^G(\theta)$, where $T$ is a $\kk$-defined maximal torus of $G$, obtained from an irreducible character $\theta$ of $T(\kk)$ by means of Deligne-Lusztig induction (see \cite{DeligneLusztig}). More specifically, we show that the constant obtained in \cite[Theorem~B]{AizenbudAvni-Bounds} may in fact serve as a bound for the value $\inner{R_T^G(\theta),\chi_{\dbC[X(\kk)]}}_{G(\kk)}$, where  $\chi_{\dbC[X(\kk)]}$ denotes the character of $\dbC[X(\kk)]$; see \autoref{propo:bounds-induced-chars}. 

Following this, we invoke certain properties of Lusztig's non-abelian Fourier transform for families of unipotent characters and Jordan Decomposition Theorem \cite[Theorem~4.23]{Lusztig-Book}, to show that in fact all irreducible characters of $G(\kk)$ occur as components in a virtual character of the form $\sum_{i=1}^r\alpha_iR_{T_i}^G(\theta_i)$, with $\abs{\alpha_i}$ and $r$ bounded independently of $\kk$. Furthermore, the coefficient of each irreducible character in this specific virtual character is taken from a fixed finite set of positive real numbers, determined by the root system of $G$. The boundedness assertion for irreducible representations then follows from the bound described in the previous paragraph.

A rather more conceptual proof of the boundedness statement of \autoref{conj:aizenbud-avni} may possibly be obtained by an intrinsic analysis the \emph{cuspidal} character sheaves of $G$ and their associated multiplicity complex. This would be the center of attention of an upcoming version of this paper.

\subsection{Notation and Organization}
In \autoref{subsection:preliminaries} we recall the and extend the proof of \cite[Theorem~2.13]{AizenbudAvni-Bounds} to the generality of Delinge-Lusztig characters of arbitrary smooth reductive group schemes (see \autoref{propo:bounds-induced-chars}). Consequently, in \autoref{section:bounds}, we prove the main statement of this paper, initially for the case of unipotent characters (\autoref{subsection:bounds-unipotent}, and subsequently, in the general setting (\autoref{subsection:bounds-arbitrary}). 

Throughout, given a finite group $\Gamma$, we write $\irr(\Gamma)$ to denote the set of irreducible complex valued characters of $\Gamma$ and denote by $\inner{\cdot,\cdot}_\Gamma$ the standard $\Gamma$-invariant inner product on the space of class functions of $\Gamma$. We fix $\ell\ne \mathrm{Char}(\kk)$, and an isomorphism $\widebar{\dbQ}_\ell\simeq \dbC$. 

As a matter of terminology, given an irreducible character $\chi$ and a virtual character $R$, we say that $\chi$ is a \textit{component} of $R$ if $R$ projects non-trivially onto the span of $\chi$. We reserve the term \textit{constituent} for the case where $R$ is a genuine character, and $\chi$ is a direct summand of $R$.

%
%
\section{Bounds on multiplicities of induced character sheaves}\label{subsection:preliminaries}
The proof of the following proposition follows the same path as \cite[Theorem~2.1.3]{AizenbudAvni-Bounds}; for completeness, we record it here, noting the necessary adjustments. As in loc.\ cit.\ , given an algebraic group $G$ and a $G$-variety $Y$, we write $Y_G=\set{(g,y)\in G\times Y: gy=y}$. Assuming further that $G$ is reductive with Borel subgroup $B$, we write $c(Y,G)$ for the number of irreducible components of $(Y\times G/B)_G$. The space of complex-valued functions on $Y(\kk)$ is denoted $\dbC[Y(\kk)]$ and $\chi_{\dbC[Y(\kk)]}$ is the character of the permutation representation of $G(\kk)$ on this space. We prove the following.


\begin{propo} \label{propo:bounds-induced-chars} Let $G$ be a connected reductive group over a finite field $\kk$, and let $X$ be a $\kk$-defined spherical $G$-space. Then for any $\kk$-defined maximal torus $T$ and any character $\theta\in\irr(T(\kk))$, \[\abs{\inner{R_T^G(\theta),\chi_{\dbC[X(\kk)]}}_{G(\kk)}}\le  c(G,X).\]
\end{propo}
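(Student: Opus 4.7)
The plan is to mirror the proof of \cite[Theorem~2.13]{AizenbudAvni-Bounds}, which handles the case where $T$ is split (equivalently, Harish-Chandra induction), and to extend it to an arbitrary $\kk$-defined maximal torus by substituting Deligne-Lusztig induction and its geometric realization in place of Harish-Chandra induction. The two ingredients driving the bound are (i)~Frobenius-reciprocity-style manipulations that decompose the inner product along $B(\kk)$-orbits on $X(\kk)$, and (ii)~the sphericity of $X$, which both makes the orbit decomposition finite and controls it by $c(G,X)$.

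First I would dispatch the split case as in loc.~cit. Choosing a $\kk$-defined Borel $B\supseteq T$ with unipotent radical $U$, one has $R_T^G(\theta)=\ind_{B(\kk)}^{G(\kk)}(\tilde\theta)$, where $\tilde\theta$ is the inflation of $\theta$ across $U(\kk)$. Frobenius reciprocity gives
\[
\inner{R_T^G(\theta),\chi_{\dbC[X(\kk)]}}_{G(\kk)} = \inner{\tilde\theta,\chi_{\dbC[X(\kk)]}|_{B(\kk)}}_{B(\kk)},
\]
and decomposing $X(\kk)$ into $B(\kk)$-orbits and reapplying Frobenius reciprocity expresses each contribution as $\inner{\tilde\theta|_{B(\kk)_x},1}_{B(\kk)_x}\in\{0,1\}$, since $\tilde\theta$ is one-dimensional. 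The absolute value of the inner product is therefore bounded by the number of $B(\kk)$-orbits on $X(\kk)$, which is in turn bounded by the (finite, by sphericity) number of $B$-orbits on $X_{\widebar\kk}$. Under the identification between $G$-orbits on $X\times G/B$ and $B$-orbits on $X$, each such $B$-orbit gives rise to at least one irreducible component of $(X\times G/B)_G$, so the count is at most $c(G,X)$.

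For a general $\kk$-defined maximal torus, I would replace $\ind_{B(\kk)}^{G(\kk)}(\tilde\theta)$ with the geometric realization
\[
R_T^G(\theta)=\sum_i(-1)^i\bigl[H^i_c(Y_T,\widebar{\dbQ}_\ell)_\theta\bigr],
\]
where $Y_T$ is the Deligne-Lusztig variety and the subscript denotes the $\theta$-isotypic component under the $T(\kk)$-action. The inner product becomes an Euler characteristic
\[
\inner{R_T^G(\theta),\chi_{\dbC[X(\kk)]}}_{G(\kk)}=\sum_i(-1)^i\dim_\dbC\hom_{G(\kk)}\bigl(H^i_c(Y_T)_\theta,\dbC[X(\kk)]\bigr).
\]
Decomposing $X(\kk)$ into $G(\kk)$-orbits and exploiting the natural $T$-equivariant projection of $Y_T$ to (a piece of) the flag variety $G/B$, each $\hom$-space becomes the $\theta$-isotypic part of the $\ell$-adic cohomology of a variety geometrically built from $(X\times G/B)_G$ and a stabilizer $G(\kk)_x$. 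The alternating sum is then bounded in absolute value by the number of irreducible components of this auxiliary variety, which by the orbit-component correspondence used in the split case is at most $c(G,X)$.

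The principal obstacle is precisely this extension to non-split $T$: the clean orbit-wise bound by~$1$, coming from the one-dimensionality of $\tilde\theta$, is unavailable in the Deligne-Lusztig setting, and one must control contributions across multiple cohomological degrees simultaneously. The comparison between the $\ell$-adic Euler characteristic of the auxiliary variety and the purely geometric quantity $c(G,X)$ is where the deeper use of sphericity enters, presumably via a Grothendieck-Lefschetz trace computation together with a bound on total cohomological dimension in terms of the number of irreducible components.
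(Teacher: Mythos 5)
Your split-case warm-up already contains a slip: the number of $B(\kk)$-orbits on $X(\kk)$ is \emph{not} bounded by the number of geometric $B$-orbits on $X_{\widebar\kk}$, since a single $\kk$-rational geometric orbit splits into rational orbits indexed by Frobenius-twisted conjugacy classes of the component group of a stabilizer; it is exactly this splitting that forces the bound to be $c(G,X)$ (components of the stabilizer scheme $(X\times G/B)_G$) rather than an orbit count, so the chain ``$\#$ rational orbits $\le\#$ geometric orbits $\le c(G,X)$'' needs to be replaced by a component-group argument. But the real gap is in the non-split case, which you acknowledge you do not know how to close, and the sketch you offer would not work as stated: an alternating sum $\sum_i(-1)^i\dim\hom_{G(\kk)}\bigl(H^i_c(Y_T)_\theta,\dbC[X(\kk)]\bigr)$ cannot be bounded by the number of irreducible components of any auxiliary variety, because only the \emph{top} compactly supported cohomology of a variety with coefficients in a rank-one local system is controlled by its number of components (\autoref{lem:bounds-on-cohomology-general}); the lower-degree terms can have arbitrarily large dimension, so there is no ``bound on total cohomological dimension in terms of the number of irreducible components.''

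The paper's proof supplies precisely the two ideas your outline is missing. First, instead of working on the Deligne--Lusztig variety $Y_T$ directly, it uses Laumon's theorem (\autoref{theo:laumon}) to realize $(-1)^{\dim T}R_T^G(\theta)$ as the trace function of the complex $R\pi_!\mcal{K}$ on $G$, where $\mcal{K}$ comes from the Kummer local system $\mcal{L}_\theta$; combined with \autoref{lem:inner-product} and \autoref{lem:reformulate-tensor}, the quantity $\abs{G(\kk)}\inner{R_T^G(\theta),\chi_{\dbC[X(\kk)]}}_{G(\kk)}$ becomes (up to sign) the Frobenius trace on $R(q\circ p)_!\tilde{\mcal{K}}$, the pushforward of a \emph{single rank-one local system} on $(X\times G/B)_G$ --- this is what puts $c(G,X)$ into play for arbitrary, possibly non-split $T$. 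Second, the passage from ``top cohomology has dimension $\le c(G,X)$'' to the actual bound on the inner product is not a one-field computation: one runs the trace over all extensions $\kk_n=\dbF_{q^n}$, uses that the complex is mixed of weight $\le 0$ and concentrated in degrees $\le 2\dim G$ so that only the top cohomology survives the normalization by $\abs{G(\kk_n)}$ asymptotically, and then uses integrality of $\varphi(n)=\inner{\chi_{R\pi_!\mcal{K}},\chi_{\dbC[X(\kk_n)]}}_{G(\kk_n)}$ (second part of \autoref{lem:inner-product}) together with periodicity to conclude $\abs{\varphi(1)}\le\limsup_n\abs{\varphi(n)}\le c(G,X)$. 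Without the character-sheaf reformulation and this weight-plus-integrality asymptotic argument, your proposal does not yield the stated bound.
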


The proof of \autoref{propo:bounds-induced-chars} is modelled after \cite[Theorem~2.2.2]{AizenbudAvni-Bounds}. The key component of the proof is to show that a specific character sheaf, whose associated function is (up to a constant) the virtual character $R_T^G(\theta)\cdot \chi_{(\dbC[X(\kk)])}=R_T^G(\theta)\cdot \widebar{\chi_{(\dbC[X(\kk)])}}$, gives rise to a mixed and bounded cohomological complex of weight $\le 0$. 

Let us recall the\textit{ multiplicity complex} introduced in \cite[\S~2.3]{AizenbudAvni-Bounds}. Fix $T$ and $\theta$, as in the proposition, and let $B\supseteq T$ be a Borel subgroup of $G$. The coset space $G/B$ is naturally identified with the variety of Borel subgroups of $G$ via $hB\mapsto {}^hB=hBh^{-1}$. 

Let $\mcal{L}=\L_\theta$ denote the Kummer local system (over $\widebar{\dbQ}_\ell$) on $T$ determined by $\theta$. Consider the diagram in \autoref{diagram:multiplicity-complex}, 
\begin{figure}[h]
\begin{tikzpicture}
  \matrix (m) [matrix of math nodes,row sep=2em,column sep=3em,minimum width=2em]
  {
		&(X\times G/B)_G\\
	X_G	&	&(G/B)_G&V\\
		&G&&T\\
		&\text{pt}
      \\};
  \path[-stealth]
    (m-1-2) edge node [above] {$\tilde{\pi}$} 	(m-2-1)
    (m-2-1) edge node [below] {$f$} 			(m-3-2)
    (m-2-4) edge node [above] {$\tau_2$} 		(m-2-3)
    (m-2-4) edge node [right] {$\tau_1$} 		(m-3-4)
	(m-1-2) edge node [above] {$\tilde{f}$}		(m-2-3)
    (m-2-3)	edge node [below] {$\pi$}			(m-3-2)
    (m-1-2) edge node [right] {$p$} 			(m-3-2)
    (m-3-2) edge node [right] {$q$}				(m-4-2);
\end{tikzpicture}\caption{}\label{diagram:multiplicity-complex}
\end{figure}
in which $f,\pi,\tilde{f},\tilde{\pi}$ and $p$ are the coordinate projection maps, $V:=\set{(g,h)\in G\times G: h^{-1}gh\in B}$,
$\tau_1(g,h)=\pi_B(h^{-1}gh)$, 
where $\pi_B:B\to B/[B,B]\simeq T$ is the natural quotient map, and $\tau_2(g,h)=(g,{}^hB)$.

There exists a one dimensional local system $\mcal{K}$ on $(G/B)_G$ such that $\tau_2^*\mcal{K}\simeq\tau_1^*\mcal{L}$; see \cite[\S~4]{Lusztig-CSI}. Given a character sheaf $\mcal{M}$ on $G$ with pure Weil structure $\alpha:\frob^*\mcal{M}\to\mcal{M}$ of weight zero we write $\chi_{\mcal{M},\alpha}$ for the associated function on $G(\kk)$, i.e. $\chi_{\mcal{M},\alpha}(g)=\Tr(\alpha_g,\mcal{M}_g)$. If the Weil structure $\alpha$ is clear from context, we occasionally omit it. The relation between the character sheaf $\mcal{K}$ and the Deligne-Lusztig character induced from $\theta$ is described explicitly by the following theorem of Laumon.
\begin{theo}[{\cite[Corollary~2.3.2]{Laumon}}]\label{theo:laumon} Let $\mcal{L}=\mcal{L}_\theta$ be the local system associated to $\theta\in\irr(T(\kk))$ and let $\mcal{K}$ be as above. Then
\[\chi_{R\pi_!\mcal{K}}=(-1)^{\dim T}R_T^G(\theta).\]
\end{theo}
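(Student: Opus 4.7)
The strategy is to apply the Grothendieck--Lefschetz trace formula to $R\pi_!\mcal{K}$, compute the resulting stalk traces via the defining relation $\tau_2^*\mcal{K}\simeq\tau_1^*\mcal{L}$, and compare the resulting expression with the classical character formula for Deligne--Lusztig induction.

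First, for $g\in G^\frob$, the trace formula gives
\[
\chi_{R\pi_!\mcal{K}}(g) \;=\; \sum_{y\in\pi^{-1}(g)^\frob}\chi_{\mcal{K}}(y) \;=\; \sum_{\substack{hB\in (G/B)^\frob \\ g\in{}^hB}}\chi_{\mcal{K}}(g,hB),
\]
since the fiber $\pi^{-1}(g)$ is the variety of Borel subgroups containing $g$. To compute each stalk trace, I use Lang--Steinberg applied to the connected group $B$: every $\frob$-stable coset $hB$ admits a representative $h\in G^\frob$, so $\tilde g:=h^{-1}gh\in B^\frob$ and $t:=\pi_B(\tilde g)\in T^\frob$. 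The point $(g,h)\in V^\frob$ satisfies $\tau_2(g,h)=(g,hB)$ and $\tau_1(g,h)=t$, so pulling the isomorphism $\tau_2^*\mcal{K}\simeq\tau_1^*\mcal{L}$ through trace functions gives $\chi_{\mcal{K}}(g,hB)=\chi_{\mcal{L}}(t)=\theta(t)$. This yields
\[
\chi_{R\pi_!\mcal{K}}(g) \;=\; \sum_{\substack{hB\in(G/B)^\frob \\ h^{-1}gh\in B}}\theta\bigl(\pi_B(h^{-1}gh)\bigr).
\]

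Second, I would match this expression with $(-1)^{\dim T}R_T^G(\theta)(g)$. The cleanest route is to first verify the identity on regular semisimple $g$: there the condition $h^{-1}gh\in B$ forces $h^{-1}gh\in T$ (the semisimple element lies in the unique maximal torus of any enclosing Borel), so $\pi_B(h^{-1}gh)=h^{-1}gh$ and the sum is indexed by $\frob$-stable Borels $hB$ whose canonical torus through $g$ is $G$-conjugate to $T$. After dividing out by the $|N_G(T)^\frob/T^\frob|$ ambiguity (cosets related by $W$ produce the same torus), this is exactly the geometric data controlling the classical Deligne--Lusztig character formula of \cite{DeligneLusztig}. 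The sign $(-1)^{\dim T}$ reflects the orientation/Euler-characteristic comparison between $R\pi_!\mcal{K}$ and the alternating sum $\sum_i(-1)^iH^i_c(X_U,\mcal{L}_\theta)$ defining $R_T^G(\theta)$, where $X_U=\set{g\in G:g^{-1}\frob(g)\in U}$ has dimension congruent to $\dim T\pmod{2}$.

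The main obstacle is extending the identity from regular semisimple elements to the whole of $G^\frob$. For non-semisimple $g$ the Deligne--Lusztig side becomes entangled with Green functions attached to the connected centralizers $C_G^\circ(s)^\frob$ (for $s$ the semisimple part of $g$), while the stalk on the left involves the Springer-type cohomology of the non-discrete fiber $\pi^{-1}(g)$. Reconciling these requires a Jordan-decomposition-based reduction -- comparing both sides on the unipotent classes of the relevant centralizer and tracking the sign across Jordan types -- which is the substance of Laumon's argument in \cite{Laumon} and is the only genuinely subtle part of the proof.
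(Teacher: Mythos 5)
There is a genuine gap, and it sits exactly where the theorem has its content. Your stalk computation assumes that a $\frob$-stable point of $G/B$ has a representative $h\in G^{\frob}$, via Lang--Steinberg applied to $B$. That is valid only when $B$ itself is $\frob$-stable. But in the setting of the statement, $T$ is an arbitrary $\kk$-defined maximal torus and $B\supseteq T$ is merely some Borel containing it, in general not defined over $\kk$ (this is unavoidable: the whole point of Deligne--Lusztig induction is to reach the twisted tori, which lie in no rational Borel). For such $B$, a Frobenius-fixed point of $(G/B)_G$ over $g$ is a pair $(g,{}^hB)$ with ${}^hB$ a $\frob$-stable Borel, but $h$ cannot be taken in $G^{\frob}$; consequently $h^{-1}gh$ is not in $B^{\frob}$, the element $\pi_B(h^{-1}gh)\in T$ is not fixed by the relevant Frobenius, and $\theta$ of it is not even defined. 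The trace of Frobenius on the stalk $\mcal{K}_{(g,{}^hB)}$ has to be computed through the Weil structure on $\mcal{K}$, which is descended via the Lang torsor of $T$ from the isomorphism $\tau_2^*\mcal{K}\simeq\tau_1^*\mcal{L}$; carrying out that computation and matching it with the alternating sum of traces on $H^i_c$ of the Deligne--Lusztig variety is precisely the substance of Laumon's result. Your displayed formula therefore covers, at best, the case where $T$ lies in a rational Borel, i.e.\ ordinary Harish-Chandra induction, where the statement is classical.

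The second half of the argument compounds this: you check the (already questionable) identity only on regular semisimple elements, and then state that the extension to all of $G(\kk)$ ``is the substance of Laumon's argument in \cite{Laumon}.'' Since equality of virtual characters cannot be deduced from their values on regular semisimple classes alone, and since the missing step is exactly the cited theorem, the proposal is circular as a proof: it reproves nothing beyond the split principal-series case and defers the genuine content to the reference. (The sign discussion via the parity of $\dim X_U$ is likewise only a heuristic; the $(-1)^{\dim T}$ comes out of the comparison of the Weil structures and the cohomological normalization in Laumon's argument, not from an orientation count one can wave at.) Note that the paper itself does not prove this statement either --- it quotes it from Laumon --- so an attempted proof here would have to be complete to stand on its own, and yours is not.
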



\begin{lem}
\label{lem:inner-product}
Let $\mcal{M}$ be a character sheaf on $G$ with pure Weil structure $\alpha$. Let $\beta$ be the natural Weil structure on $Rq_!(\mcal{M}\otimes^L Rf_!\Ql)$. Then 
\[\chi_{Rq_!(\mcal{M}\otimes^L Rf_!\Ql),\beta}=\abs{G(\kk)}\inner{\chi_{\mcal{M},\alpha},\chi_{\dbC[X(\kk)]}}_{G(\kk)}.\]
Furthermore, if $\mcal{M}=R\pi_!\mcal{K}$ and $\alpha$ is its natural Weil structure, then $\inner{\chi_{\mcal{M},\alpha},\vartheta}_{G(\kk)}\in\dbZ$ for any character $\vartheta$ of $G(\kk)$. 
\end{lem}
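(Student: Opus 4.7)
The plan is to derive the first identity from the Grothendieck-Lefschetz trace formula applied to the structure morphism $q\colon G\to\spec(\kk)$, and to derive the second from Laumon's \autoref{theo:laumon} together with the integrality of Deligne-Lusztig characters.

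First, by the Grothendieck-Lefschetz trace formula applied to $q$ with the complex $\mcal{M}\otimes^L Rf_!\Ql$, one has
\[\chi_{Rq_!(\mcal{M}\otimes^L Rf_!\Ql),\beta}=\sum_{g\in G(\kk)} \chi_{\mcal{M}\otimes^L Rf_!\Ql}(g).\]
Compatibility of the trace-of-Frobenius function with derived tensor product of mixed Weil complexes converts each summand to the pointwise product $\chi_{\mcal{M},\alpha}(g)\cdot \chi_{Rf_!\Ql}(g)$. A second application of Grothendieck-Lefschetz, this time to $f\colon X_G\to G$, identifies $\chi_{Rf_!\Ql}(g)$ with the number of $\kk$-points of the fiber $(X_G)_g=\set{x\in X:gx=x}$, which is precisely the value at $g$ of the permutation character $\chi_{\dbC[X(\kk)]}$. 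Since that permutation character is integer valued, pointwise conjugation acts trivially on it, and the displayed sum is exactly $\abs{G(\kk)}\inner{\chi_{\mcal{M},\alpha},\chi_{\dbC[X(\kk)]}}_{G(\kk)}$.

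For the second assertion, substitute $\mcal{M}=R\pi_!\mcal{K}$; by \autoref{theo:laumon}, $\chi_{\mcal{M},\alpha}=(-1)^{\dim T}R_T^G(\theta)$. The Deligne-Lusztig virtual character $R_T^G(\theta)$ is by construction a $\dbZ$-linear combination of elements of $\irr(G(\kk))$, being the alternating sum of Frobenius traces on the $\theta$-isotypic components of the $\ell$-adic cohomology of the Deligne-Lusztig variety. Consequently $\inner{\chi_{\mcal{M},\alpha},\vartheta}_{G(\kk)}$ decomposes as an integer linear combination of the non-negative integer multiplicities of the irreducibles in $\vartheta$, and hence is itself an integer.

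The only delicate point, and the one I expect to require the most care, is verifying that the natural Weil structure $\beta$ on $Rq_!(\mcal{M}\otimes^L Rf_!\Ql)$ is indeed the one induced by $\alpha$ and the tautological Weil structure on $\Ql$, so that both invocations of the trace formula apply on the nose to the complexes and Frobenius actions under consideration. Once this compatibility is established, both assertions of the lemma follow directly from the manipulations above.
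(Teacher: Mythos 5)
Your proposal is correct and follows essentially the same route as the paper: the first identity is exactly the trace-formula computation (Grothendieck--Lefschetz for $q$, multiplicativity of trace functions under $\otimes^L$, and proper base change along $f$ identifying $\chi_{Rf_!\Ql}$ with the permutation character) that the paper delegates to \cite[Lemma~2.3.2]{AizenbudAvni-Bounds}, and your second assertion is proved just as in the paper, via \autoref{theo:laumon} and the fact that $R_T^G(\theta)$ is an alternating (hence $\dbZ$-linear) combination of genuine characters coming from the cohomology of the Deligne--Lusztig variety. The compatibility of the ``natural'' Weil structure $\beta$ with $\alpha$ and the tautological structure on $\Ql$, which you flag as the delicate point, is indeed definitional here and is treated the same way in the reference.
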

\begin{proof}
For the first assertion, see {\cite[Lemma~2.3.2]{AizenbudAvni-Bounds}}. To prove the second assertion, note that by \autoref{theo:laumon}, we may write 
\[\chi_{\mcal{M},\alpha}=\chi_{R\pi_!\mcal{K}}=(-1)^{\dim T}\sum_{i\in\dbZ}(-1)^i \vartheta_i,\]
where $\vartheta_i$ is the character of the action of $G(\kk)$ on the $\Ql$-vector space $H^i_c(X,\Ql)$, where $X=\tilde{X}(\dot{w})$ is as in \cite[Definition~1.9]{DeligneLusztig}. Since the $\vartheta_i$ are characters of genuine representations of $G(\kk)$, we have that $\inner{\vartheta_i,\vartheta}\in\dbZ_{\ge 0}$ for all $i\in\dbZ$. The second assertion follows, since $H^i_c(X,\Ql)=0$ for all but finitely many $i$'s. 
\end{proof}

The following lemma gives an alternative description of the sheaf considered in \autoref{lem:inner-product}, as the push-forward of a single one-dimensional local system on $(X\times G/B)_G$.
\begin{lem}\label{lem:reformulate-tensor} $R\pi_!\mcal{K}\otimes^L Rf_!\Ql=Rp_!\tilde{\mcal{K}}$, where $\tilde{\mcal{K}}=\tilde{f}^*\mcal{K}$. 
\end{lem}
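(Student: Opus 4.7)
My plan is to observe that the square formed by $\tilde\pi,\tilde f,f,\pi$ in \autoref{diagram:multiplicity-complex} is Cartesian, and then apply proper base change together with the projection formula. Concretely, from the definitions,
\[
(X\times G/B)_G=\{(g,x,hB)\in G\times X\times G/B:gx=x,\ g(hB)=hB\},
\]
which is visibly the fiber product $X_G\times_G (G/B)_G$, with $\tilde\pi$ and $\tilde f$ the two projections. So the relevant square really is Cartesian, and this is the only geometric input needed.

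Once this is in place, the identity becomes a purely formal manipulation of derived functors on constructible $\widebar{\dbQ}_\ell$-sheaves. First I would apply the projection formula to $f$, with $\mcal{F}=\Ql$ on $X_G$ and $\mcal{G}=R\pi_!\mcal{K}$ on $G$, giving
\[
R\pi_!\mcal{K}\otimes^L Rf_!\Ql\;\simeq\;Rf_!\bigl(f^*R\pi_!\mcal{K}\bigr).
\]
Then I would use proper base change on the Cartesian square to rewrite $f^*R\pi_!\mcal{K}\simeq R\tilde\pi_!\,\tilde f^*\mcal{K}=R\tilde\pi_!\,\tilde{\mcal{K}}$. Finally, the compatibility of $R(-)_!$ with composition gives $Rf_!R\tilde\pi_!=R(f\circ\tilde\pi)_!=Rp_!$, and substituting yields $R\pi_!\mcal{K}\otimes^L Rf_!\Ql\simeq Rp_!\tilde{\mcal{K}}$, as desired.

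There is essentially no obstacle here; the statement is a standard ``projection formula plus base change'' identity once the Cartesian structure of the square is noted. The only thing to double-check is that the maps in \autoref{diagram:multiplicity-complex} are set up so that the identifications $p=\pi\circ\tilde f=f\circ\tilde\pi$ and $\tilde{\mcal{K}}=\tilde f^*\mcal{K}$ are consistent with the Cartesian diagram above, which is immediate from their definitions as coordinate projections.
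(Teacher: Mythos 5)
Your argument is correct and is exactly the standard one: the square formed by $\tilde\pi,\tilde f,f,\pi$ is Cartesian since $(X\times G/B)_G\simeq X_G\times_G (G/B)_G$, and the projection formula plus base change for $R(-)_!$ plus $f\circ\tilde\pi=p$ gives the identity. The paper itself simply cites \cite[Lemma~2.3.3]{AizenbudAvni-Bounds}, whose proof is this same projection-formula-and-base-change computation, so your proposal fills in precisely the argument being delegated to that reference.
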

\begin{proof}
{See \cite[Lemma~2.3.3]{AizenbudAvni-Bounds}}.
\end{proof}

Finally, we have the following well-known fact.

\begin{lem}[{See \cite[Lemma~2.3.4]{AizenbudAvni-Bounds}}]\label{lem:bounds-on-cohomology-general}
Let $X$ be a $d$-dimensional variety and let $\mcal{F}$ be a one-dimensional local system on $X$. Then $H^i_c(X,\mcal{F})=0$ unless $i\in\set{0,\ldots,2d}$ and $\dim H_c^i(X,\mcal{F})$ is bounded by the number of irreducible components of $X$. 
\end{lem}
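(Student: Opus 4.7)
The plan is to handle the two assertions separately. The vanishing $H^i_c(X,\mcal{F})=0$ for $i\notin\{0,\ldots,2d\}$ is a standard consequence of bounds on the $\ell$-adic cohomological dimension: for any constructible $\widebar{\dbQ}_\ell$-sheaf on a variety of dimension $d$, compactly supported étale cohomology is concentrated in degrees $0$ through $2d$ by Artin--Grothendieck vanishing, and is trivially zero in negative degrees. I would simply quote this rather than reprove it.

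For the dimension bound, I would proceed by Noetherian induction on $X$, with the excision long exact sequence as the main tool. Choose a smooth dense open subset $U\subseteq X$ meeting each top-dimensional irreducible component, with closed complement $Z=X\setminus U$ of strictly smaller dimension and whose irreducible components other than those coming from lower-dimensional components of $X$ live in $Z\cap\widebar{U}\setminus U$. The excision sequence
\[
\cdots\to H^i_c(U,\mcal{F}|_U)\to H^i_c(X,\mcal{F})\to H^i_c(Z,\mcal{F}|_Z)\to\cdots
\]
gives the additive bound
\[
\dim H^i_c(X,\mcal{F})\le \dim H^i_c(U,\mcal{F}|_U)+\dim H^i_c(Z,\mcal{F}|_Z).
\]
The inductive hypothesis controls the second summand by the number of irreducible components of $Z$, and after refining $U$ as a disjoint union of smooth connected strata (one per top-dimensional component of $X$), the task reduces to bounding $\dim H^i_c(U',\mcal{F}|_{U'})$ by $1$ on each smooth connected piece $U'$ equipped with a rank-one coefficient system.

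The main obstacle is precisely this last step: establishing that a rank-one local system on a smooth connected variety contributes at most one unit of dimension in each relevant cohomological degree. Here one can argue via Poincaré duality to reduce top-degree statements to $\pi_1$-coinvariants of a one-dimensional stalk, and then propagate the bound to intermediate degrees by iterating the excision argument internally (peeling off a further smooth open with complement of smaller dimension). The delicate part is the bookkeeping required to ensure that the cumulative bound collapses to exactly the number of irreducible components of $X$ rather than the (potentially larger) number of strata used along the way; this is the content of the cited \cite[Lemma~2.3.4]{AizenbudAvni-Bounds}, which I would invoke to close the argument rather than redo from scratch.
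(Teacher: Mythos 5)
The paper gives no proof of this lemma at all: it is imported verbatim from \cite[Lemma~2.3.4]{AizenbudAvni-Bounds}, so your decision to close the argument by citing that same lemma is, in the end, exactly what the paper does, and to that extent the two ``proofs'' coincide. Your treatment of the vanishing outside degrees $0,\ldots,2d$ (Artin--Grothendieck vanishing plus triviality in negative degrees) is also fine.

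However, the independent route you sketch for the dimension bound contains a genuine error, and it is worth isolating. The key reduction you propose --- ``a rank-one local system on a smooth connected variety contributes at most one unit of dimension in each relevant cohomological degree'' --- is false in every degree except the top one: already for the constant sheaf $\Ql$ on a smooth connected curve $U$ of positive genus $g$ (or on $\dbP^1$ minus three points) one has $\dim H^1_c(U,\Ql)\ge 2$, while $U$ has a single irreducible component. Consequently no amount of excision bookkeeping over strata can yield a bound on $\dim H^i_c(X,\mcal{F})$ by the number of irreducible components for intermediate $i$; the statement is simply not true degreewise, and the transcription in the lemma should be read (as in \cite[Lemma~2.3.4]{AizenbudAvni-Bounds}, and as is the only thing used downstream in \autoref{corol:mixed-bounded-complex}) as a bound on the \emph{top} cohomology $H^{2d}_c$ only. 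For that statement your own ingredients suffice and the ``delicate bookkeeping'' you worry about disappears: on a smooth connected $U$ of dimension $d$, Poincar\'e duality identifies $H^{2d}_c(U,\mcal{F})$ with a Tate twist of the $\pi_1(U)$-coinvariants of the one-dimensional stalk, hence $\dim H^{2d}_c(U,\mcal{F})\le 1$; strata of dimension $<d$ contribute nothing in degree $2d$; and the excision sequence then bounds $\dim H^{2d}_c(X,\mcal{F})$ by the number of $d$-dimensional irreducible components of $X$. So the proposal as written does not prove the lemma as stated (nor could it, since that statement is false for intermediate degrees), but the top-degree argument you gesture at is the correct and complete one for what the paper actually needs.
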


From the last two lemmas we obtain the following.
\begin{corol}\label{corol:mixed-bounded-complex} $Rq_!(R\pi_!\mcal{K}\otimes^L Rf_!\Ql)$ is a mixed complex of weight $\le 0$, concentrated in degrees $0,\ldots,2\dim G$ and $\dim H^{2\dim G}(Rq!(R\pi_!\mcal{K}\otimes^L Rf_!\Ql))\le c(X,G)$. 
\end{corol}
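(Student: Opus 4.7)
The plan is to reduce the complex in question to the compactly-supported cohomology of a one-dimensional local system on a single variety, and then to invoke \autoref{lem:bounds-on-cohomology-general} together with Deligne's purity theorem.

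By \autoref{lem:reformulate-tensor}, $R\pi_!\mcal{K}\otimes^L Rf_!\Ql\simeq Rp_!\tilde{\mcal{K}}$, so composing with $Rq_!$ and noting that $q\circ p$ is the structure morphism from $(X\times G/B)_G$ to a point yields
\[
Rq_!\bigl(R\pi_!\mcal{K}\otimes^L Rf_!\Ql\bigr)\simeq R(q\circ p)_!\tilde{\mcal{K}}\simeq R\Gamma_c\bigl((X\times G/B)_G,\tilde{\mcal{K}}\bigr).
\]
Thus the three assertions reduce to statements about $R\Gamma_c$ of the one-dimensional local system $\tilde{\mcal{K}}=\tilde{f}^*\mcal{K}$ on the variety $(X\times G/B)_G$.

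The next step is to bound $d:=\dim(X\times G/B)_G$. The projection $(X\times G/B)_G\to G/B$, $(g,x,hB)\mapsto hB$, has fibre over $hB$ the incidence variety for the action of ${}^hB$ on $X$. Since $X$ is $G$-spherical, ${}^hB$ acts on $X$ with finitely many orbits $\mcal{O}$, each of dimension at most $\dim B$; the preimage of each such orbit contributes exactly $\dim B$ (as $\dim\Stab_{{}^hB}(x)+\dim\mcal{O}=\dim B$), so the fibre has dimension at most $\dim B$. Hence $d\le \dim B+\dim G/B=\dim G$. Applying \autoref{lem:bounds-on-cohomology-general} to $\tilde{\mcal{K}}$ on this variety now yields both the concentration in degrees $[0,2\dim G]$ and the bound $\dim H^{2\dim G}_c\le c(X,G)$, directly from the definition of $c(X,G)$.

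For the weight bound, I would argue that $\tilde{\mcal{K}}$ is pure of weight $0$: the local system $\mcal{K}$ on $(G/B)_G$ is constructed from the Kummer local system $\mcal{L}_\theta$, which is pure of weight zero since $\theta$ takes values in roots of unity, and purity is preserved by $\tilde{f}^*$. Deligne's main theorem from Weil~II then implies that $R\Gamma_c$ of a pure complex of weight zero is mixed of weight $\le 0$, completing the proof. The step I expect to be the most delicate is the dimension count: although the orbit-count bound is morally clear, one must verify that the incidence pieces over the various $B$-orbits on $X$ all have dimension exactly $\dim B$, so that no stratum pushes the total dimension above $\dim G$; once this and the weight of $\tilde{\mcal{K}}$ are in hand, the three conclusions are immediate from the cited lemmas.
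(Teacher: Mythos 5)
Your proposal is correct and follows essentially the same route as the paper: reduce via \autoref{lem:reformulate-tensor} to $R(q\circ p)_!\tilde{\mcal{K}}$, bound the cohomology using \autoref{lem:bounds-on-cohomology-general}, and obtain the weight bound from the stability (Weil II) of mixedness of weight $\le 0$ under proper-support push-forward. The only difference is that you spell out the dimension count $\dim (X\times G/B)_G\le \dim G$ (via finiteness of $B$-orbits on a spherical variety) and the weight-zero purity of $\tilde{\mcal{K}}$, both of which the paper's proof leaves implicit.
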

\begin{proof}
By \autoref{lem:reformulate-tensor}, it is enough to bound the weights and cohomologies of $R(q\circ p)_! \tilde{\mcal{K}}$. The boundedness of weights follow from the fact that push-forward with proper support maps mixed sheaves of weight $\le 0$ to mixed sheaves of weight $\le 0$. The boundedness of cohomologies follows from \autoref{lem:bounds-on-cohomology-general}, applied to the one-dimensional local system $\tilde{\mcal{K}}$ on $(X\times G/B)_G$.
\end{proof}

%
\begin{proof}[Proof of \autoref{propo:bounds-induced-chars}]
Writing $\beta_n$ for the natural Weil structure on $\mcal{A}:=R_{q!}(R\pi_!\mcal{K}\otimes^L R_{f!}\Ql)$ with respect to the Weil structure given on $R\pi_!\mcal{K}$ from $\frob^n$, and denoting $\kk_n=\dbF_{q^n}$, by \autoref{lem:inner-product} we have that 
\[\varphi(n)=\frac{\Tr(\beta^n,\mcal{A})}{\abs{G(\kk_n)}}=\inner{\chi_{\mcal{M},\alpha^n},\chi_{\dbC[X(\kk_n)]}}_{G(\kk_n)}.\]
By the second assertion of \autoref{lem:inner-product}, the RHS is an integer. Applying the same argument as \cite[Theorem~2.1.3]{AizenbudAvni-Bounds}, we have that $\varphi$ is a periodic function and asymptotically bounded by $c(X,G)$. By \autoref{lem:inner-product} It follows that
\[\abs{\varphi(1)}=\inner{\chi_{\mcal{M},\alpha},\chi_{\dbC[X(\kk)]}}_{G(\kk)}\le \limsup_{n\to\infty}\abs{\varphi(n)}\le c(X,G).\]
\end{proof}

\section{Bounds on multiplicities of irreducible characters}\label{section:bounds}
\subsection{Unipotent characters}\label{subsection:bounds-unipotent}
Let $G$ be a connected reductive group over $\kk$. Recall that an irreducible character $\chi$ of $G(\kk)$ is said to be \textit{unipotent} if it occurs with non-zero coefficient in the Deligne-Lusztig $R_T^G(\id_T)$, where $T$ is a $\kk$-defined maximal torus of $G$ (not necessarily within a $\kk$-defined Borel subgroup) and $\id_T$ is its trivial character. The set of unipotent characters is denoted $\mcal{E}(G(\kk),(1))$. An irreducible representation $\rho$ of $G(\kk)$ is said to be unipotent if its character is such.

Let $T_1\subseteq G$ be a fixed $\kk$-defined maximal torus within a $\kk$-defined Borel subgroup $B_1$, and put $W=W_G(T_1)$ and $\Phi=\Phi_G(T_1)$ for the corresponding Weyl group and root system, respectively. 

\begin{propo}\label{propo:bound-for-unipotent-chars}
	There exist numbers $M_\Phi,N_\Phi\in\dbN$, determined by the root system of $G$, such that, if $\abs{\kk}>N_\Phi$, then for any $\kk$-defined spherical $G$-space $X$ and any \emph{unipotent} representation $\rho$
\[\dim\hom(\rho,\dbC[X(\kk)])<M_\Phi\cdot\abs{W}\cdot c(G,X).\]
\end{propo}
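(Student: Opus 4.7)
The plan is to decompose each unipotent irreducible character $\chi$ of $G(\kk)$ as a short linear combination, with coefficients bounded independently of $q$, of Deligne--Lusztig characters $R_{T_w}^G(\id_{T_w})$, and then to apply \autoref{propo:bounds-induced-chars} to each term. Label the $G(\kk)$-conjugacy classes of $\kk$-defined maximal tori of $G$ by $\frob$-conjugacy classes in $W$, and for each class fix a representative torus $T_w$. Following Lusztig, for each $\frob$-stable $\phi\in\irr(W)$ define the almost-character
\[
R_\phi := \frac{1}{|W|}\sum_{w\in W}\phi(w)\cdot R_{T_w}^G(\id_{T_w}).
\]
By \cite[Theorem~4.23]{Lusztig-Book}, the set $\mcal{E}(G(\kk),(1))$ partitions into families $\mcal{F}_1,\ldots,\mcal{F}_r$, each in bijection with a subset $\mcal{F}_i'$ of the index set of almost-characters, and every $\chi\in\mcal{F}_i$ admits an expression
\[
\chi = \sum_{\phi\in\mcal{F}_i'}\{\phi,\chi\}\cdot R_\phi,
\]
where $[\{\phi,\chi\}]$ is the non-abelian Fourier transform matrix of the family.

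The crucial feature of this classification is that once $q=|\kk|$ exceeds a bound $N_\Phi$ depending only on $\Phi$, the numbers $r$, the sizes $|\mcal{F}_i'|$, and the entries $\{\phi,\chi\}$ all range over a finite set determined by $\Phi$ alone. Let $M_\Phi$ denote a uniform upper bound on $\max\bigl(|\mcal{F}_i'|,\,|\{\phi,\chi\}|\bigr)$ taken across all families that can arise from subsystems of type $\Phi$. Combining the orthonormality relation $\sum_{w\in W}|\phi(w)|^2=|W|$ (which via Cauchy--Schwarz yields $\sum_{w\in W}|\phi(w)|\le|W|$) with \autoref{propo:bounds-induced-chars}, one obtains
\[
\bigl|\langle R_\phi,\chi_{\dbC[X(\kk)]}\rangle_{G(\kk)}\bigr|\le \frac{1}{|W|}\sum_{w\in W}|\phi(w)|\cdot c(G,X)\le c(G,X).
\]
Substituting the Fourier decomposition of $\chi$ and applying the triangle inequality gives
\[
\dim\hom(\rho,\dbC[X(\kk)]) = \langle\chi,\chi_{\dbC[X(\kk)]}\rangle_{G(\kk)}\le M_\Phi\cdot|\mcal{F}_i'|\cdot c(G,X)\le M_\Phi\cdot|W|\cdot c(G,X),
\]
where the last step uses the crude bound $|\mcal{F}_i'|\le|\irr(W)|\le|W|$ to match the form of the stated inequality.

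The main obstacle is conceptual rather than arithmetical: one must invoke the full strength of Lusztig's classification and appeal to the assertion that the family structure and Fourier transform matrices depend only on $\Phi$ once $q>N_\Phi$. The constant $N_\Phi$ is introduced to absorb finitely many small-characteristic pathologies (for instance those arising from exceptional Frobenius automorphisms in types $B_2$, $G_2$, $F_4$, and the various twisted types), for which ad hoc verification of the universality of the Fourier data may be required.
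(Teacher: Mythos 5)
Your argument breaks at the Fourier-inversion step. The identity $\chi=\sum_{\phi\in\mcal{F}_i'}\{\phi,\chi\}\,R_\phi$, with $\phi$ ranging only over ($\tau_\frob$-stable) irreducible characters of $W$ belonging to the family, is false in general: it would say that every unipotent character is a \emph{uniform} function, i.e.\ lies in the span of the Deligne--Lusztig characters $R_{T_w}^G(\id_{T_w})$. The non-abelian Fourier matrix of a family is indexed by $\mset(\Gamma_\family)$, and only part of that index set corresponds to characters of $W$; inverting the transform therefore also requires the remaining ``non-uniform'' almost characters, which are \emph{not} linear combinations of the $R_w$ and to which \autoref{propo:bounds-induced-chars} gives no information. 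Concretely, already for $G=\Sp_4$ (type $B_2$) there is a family of four unipotent characters with $\Gamma_\family=\mfr{S}_2$ and Fourier matrix \eqref{equation:fourier-matrix-4by4}; since that matrix has no zero entries, none of the four characters (three principal-series constituents and the cuspidal unipotent character) lies in the span of the $R_w$. What your chain of inequalities actually bounds is the pairing of $\chi_{\dbC[X(\kk)]}$ with the orthogonal projection of $\chi$ onto the uniform subspace, not $\inner{\chi,\chi_{\dbC[X(\kk)]}}_{G(\kk)}=\dim\hom(\rho,\dbC[X(\kk)])$ itself, and the difference is invisible to all the $R_\phi$.

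The missing idea, and the route the paper takes, is positivity rather than inversion. Since $\dbC[X(\kk)]$ is a genuine representation, $\inner{\chi',\chi_{\dbC[X(\kk)]}}_{G(\kk)}\ge 0$ for every irreducible $\chi'$; and Lusztig's classification (\autoref{lem:unipotents-pos-in-almost-char}) provides, for each unipotent $\chi$ and $\abs{\kk}$ large enough, a single almost character $R_\phi$ (with $\chi_\phi$ in the family $\family$ of $\chi$) whose decomposition $R_\phi=\sum_{\chi'\in\family}\gamma_{\chi'}\chi'$ has \emph{all} coefficients $\gamma_{\chi'}>0$ lying in a finite set $C_\Phi$ determined by $\Phi$ alone. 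Dropping the other (nonnegative) terms gives $\inner{\chi,\chi_{\dbC[X(\kk)]}}_{G(\kk)}\le M_\Phi\abs{\inner{R_\phi,\chi_{\dbC[X(\kk)]}}_{G(\kk)}}$ with $M_\Phi>\max\set{\gamma^{-1}:\gamma\in C_\Phi}$, and then your (correct) estimate $\abs{\inner{R_\phi,\chi_{\dbC[X(\kk)]}}_{G(\kk)}}\le\abs{W}\cdot c(G,X)$, coming from \autoref{propo:bounds-induced-chars}, finishes the proof. Two smaller points: in the twisted cases the almost characters must be built from $\phi(w\cdot\tau_\frob)$ for an extension of $\phi$ to $W\ltimes\gen{\tau_\frob}$, not from $\phi(w)$; and the bound on the coefficients per term is simply $\abs{\phi(w\cdot\tau_\frob)}/\abs{W}\le 1$, no Cauchy--Schwarz is needed.
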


\begin{corol}
	\autoref{conj:aizenbud-avni} holds for whenever $\rho$ is an irreducible unipotent representation.
\end{corol}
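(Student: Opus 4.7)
The plan is to combine \autoref{propo:bound-for-unipotent-chars} with a dichotomy on the size of the residue field. Since $G$ is a smooth reductive group scheme over $\dbZ$, the geometric root datum, and hence the constants $M_\Phi$, $N_\Phi$, and $\abs{W}$, are independent of the finite field under consideration. The remaining geometric input in the statement of \autoref{propo:bound-for-unipotent-chars}, namely $c(G_F, X_F)$, depends a priori on $F$ and must be bounded uniformly; I will do this by a standard constructibility argument.

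Concretely, fix a Borel subgroup scheme $B \subseteq G$ over $\dbZ$. The incidence scheme $(X \times G/B)_G$ is of finite type over $\dbZ$, so the function on $\spec(\dbZ)$ sending a geometric point to the number of geometric irreducible components of the corresponding fiber is constructible, hence assumes finitely many values on the Noetherian base $\spec(\dbZ)$. Together with the fact that base change from $\dbF_p$ to $\dbF_{p^n}$ does not change the count of geometric irreducible components, this supplies a constant $C_0 \in \dbN$ with $c(G_F, X_F) \le C_0$ for every finite field $F$. With this in hand, \autoref{propo:bound-for-unipotent-chars} yields, whenever $\abs{F} > N_\Phi$ and $\rho$ is an irreducible unipotent representation of $G(F)$,
\[
\dim \hom(\rho, \dbC[X(F)]) < M_\Phi \cdot \abs{W} \cdot C_0,
\]
a bound depending only on the $\dbZ$-schemes $G$ and $X$.

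The small-field regime is handled by trivial finiteness: there are only finitely many isomorphism classes of finite fields $F$ with $\abs{F} \le N_\Phi$, and for each such $F$ both $G(F)$ and $X(F)$ are finite, so for any irreducible representation $\rho$ of $G(F)$ one has $\dim \hom(\rho, \dbC[X(F)]) \le \abs{X(F)}$. Let $D$ denote the maximum of these finitely many values. Then $C := \max(M_\Phi \cdot \abs{W} \cdot C_0,\, D) + 1$ provides the uniform constant demanded by \autoref{conj:aizenbud-avni} in the unipotent case. The one non-routine step above is the constructibility-based uniform bound on $c(G_F, X_F)$; once it is in place, the remainder of the argument is simply the packaging of \autoref{propo:bound-for-unipotent-chars} with a finite casework at the small-field end.
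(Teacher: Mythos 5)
Your proposal is correct and follows essentially the same route as the paper, which states the corollary without a separate proof and performs exactly this packaging (the proposition \autoref{propo:bound-for-unipotent-chars} for $\abs{F}>N_\Phi$, trivial finiteness for the finitely many small fields, and a single uniform constant $c(G,X)$) in its proof of \autoref{conj:aizenbud-avni}; your constructibility argument simply makes explicit the uniformity in $F$ of $c(G_F,X_F)$ that the paper leaves implicit. One small caveat: a Borel subgroup scheme $B\subseteq G$ need not exist over $\dbZ$, but this is harmless, since one can replace $G/B$ by the scheme of Borel subgroups of $G$ (which exists as a smooth projective $\dbZ$-scheme for any reductive group scheme), after which your constructibility bound on the number of geometric irreducible components of the fibers of $(X\times G/B)_G\to\spec(\dbZ)$ goes through verbatim.
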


\begin{proof}[Proof of \autoref{propo:bound-for-unipotent-chars}]
	For any $w\in W$, let $B_w\subseteq G$ be a fixed Borel subgroup such that $B$ and $\sigma(B)$ are in relative position $w$, and let $T_w\subseteq B_w$ its maximal torus, with $T_1$ and $B_1$ as above. Write $R_w=R_{T_w}^G(\id_{T_w(\kk)})$. Let $\tau_\frob\in \aut(W)$ denote the map induced the action of $\sigma$ on $N_G(T)$. Any $\tau_\frob$-invariant irreducible character $\phi\in\irr(W)$ may be extended to a character of $\tilde{W}=W\ltimes\gen{\tau_\frob}$, and gives rise to an associated \textit{almost character} of $G(\kk)$, which is the virtual character defined by
		\[R_\phi=\frac{1}{\abs{W}}\sum_{w\in W}\phi(w\cdot \tau_\frob)R_w.\]
	The almost characters comprise, an alternarive, somewhat more approachable basis to the space spanned by the set $\set{R_w}_{w\in W}$.

	In a series of papers \cite{Lusztig-Exceptional,Lusztig-Orthogonal, Lusztig-Srinivasan,Lusztig-TypeE8, Lusztig-Symplectic} Lusztig et.\ al.\ analysed the decomposition of the almost characters as linear combinations of irreducible unipotent characters, for all simple adjoint groups (with few exceptions, complemented later on by Geck and Malle \cite{Geck-Malle}). The key statements relevant to this paper are summarised in \autoref{appendix:non-abelian-fourier}; specifically, see \autoref{lem:unipotents-pos-in-almost-char}. Concisely, one has the following.
	\begin{enumerate}
		\item The irreducible characters of $W$ are in natural bijection with the set of irreducible constituents of $R_1=\ind_{B_1(\kk)}^{G(\kk)}(\id_{B_1(\kk)})$. Given $\phi\in\irr(W)$ we write $\chi_\phi$ for the corresponding constituent of $R_1$. 
		\item The set of unipotent characters partitioned into \textit{families}: $\mcal{E}(G(\kk),(1))=\family_1\sqcup\ldots\sqcup\family_r$, and any family contains elements of the form $\chi_\phi$, for $\phi\in\irr(W)$. Given $\phi_1,\phi_2\in\irr(W)^{\tau_\frob}$, the characters $\chi_{\phi_1},\chi_{\phi_2}$ lie in the same family of $\mcal{E}(G(\kk),(1))$ if and only if $\phi_1$ and $\phi_2$ lie in the same family, in the sense of \cite[\S~4.2]{Lusztig-Book}.
		\item\label{item:3} 
		Assuming $\abs{\kk}\gg 0$, for any $\chi\subseteq\mcal{E}(G(\kk),(1))$, with $\family$ its family, there exists $\phi\in \irr(W)^{\tau_\frob}$ with $\chi_\phi\in \family$ such that the corresponding almost character $R_{\phi}$ is of the form $\sum_{\chi\in \family}\gamma_\chi\cdot \chi$ with $\gamma_\chi > 0$. Furthermore, there exists a finite set $C_\Phi\subseteq \dbR_{>0}$, determined by the root system of $G$ and not the ground field, such that all $\set{\gamma_{\chi}:\chi\in\family}\subseteq C_\Phi$. 
	\end{enumerate}
	Let $M_\Phi>\max\set{\gamma^{-1}:\gamma\in C_\Phi}$, and let $\chi\in\family$ be a unipotent character within its family. Let $\phi\in\irr(W)^{\tau_\frob}$ be the as in \autoref{item:3}. Then
		\[\abs{\inner{R_\phi,\chi_{\dbC[X(\kk)]}}_{G(\kk)}}\le\sum_{w\in W}\frac{\abs{\phi(w\cdot \tau_\frob)}}{\abs{W}}\abs{\inner{R_w,\chi_{\dbC[X(\kk)]}}_{G(\kk)}}\le \abs{W}\cdot c(G,X),\]
	where the last inequality follows from \autoref{propo:bounds-induced-chars} and the fact that $\abs{\phi(z)}\le\phi(1)\le \abs{W}$, for any $\phi\in\irr(W)$ and $z\in W$. As all components of $R_\phi$ are irreducible characters and occur with positive coefficient, we have that
		\[\abs{\inner{R_\phi,\chi_{\dbC[X(\kk)]}}_{G(\kk)}}= \sum_{\chi'\in\family}\gamma_{\chi'}\inner{\chi',\chi_{\dbC[X(\kk)]}}_{G(\kk)}\ge \frac{1}{M_\Phi}\inner{\chi,\chi_{\dbC[X(\kk)]}}_{G(\kk)}.\]
	The proposition follows.
\end{proof}

\subsection{Arbitrary irreducible characters}\label{subsection:bounds-arbitrary}
As above, let $G$ be a connected reductive group over $\kk$ and $T$ a $\kk$-defined maximal torus within a $\kk$-defined Borel subgroup $B$. Two Deligne-Lusztig characters $R_{T_1}^G(\theta_1),R_{T_2}^G(\theta_2)$, with $T_1,T_2$ $\kk$-defined maximal tori and $\theta_i\in\irr(T_i(\kk))$ have a common irreducible component if and only if the pairs $(T_1,\theta_1)$ and $(T_2,\theta_2)$ are \textit{geometrically conjugate}; see \cite[Definition~13.2]{DigneMichel}. 

The geometric conjugacy classes of $G(\kk)$ affords an elegant description in terms involving the algebraic dual group. Let $G^*$ denote the algebraic dual group of $G$, with respect to a maximal torus $T^*$, dual to $T$. The $G(\kk)$-conjugacy classes of pairs $(T_1,\theta_1)$ as above, are in bijection with the $G^*(\kk)$-conjugacy classes of pairs $(T^*_1,s_1)$, where $T^*_1\subseteq G^*$ is a $\kk$-defined torus, and $s_1\in T^*_1(\kk)$; see \cite[Proposition~13.13]{DigneMichel}. Under this bijection, two pairs $(T_1,\theta_1),(T_2,\theta_2)$ lie in the same \textit{geometric} conjugacy classes, if and only if the corresponding pairs $(T_i^*,s_i)$ ($i=1,2$) are such that $s_1$ and $s_2$ are $G^*(\kk)$-conjugate. 

If the $G(\kk)$-class of $(T_1,\theta_1)$ is mapped by this bijection to the $G^*(\kk)$-class of $(T^*_1,s_1)$, it is often convenient to write $R_{T_1^*}(s_1)$ for $R_{T_1}(\theta_1)$; note that this notation is well-defined. 

Given $(s)\subseteq G^*(\kk)$ a semisimple conjugacy class, one defines the associated \textit{Lusztig series} $\mcal{E}(G(\kk),(s))$ to be the set of irreducible characters of $G(\kk)$ which occur as components in a Deligne-Lusztig character $R_{T_1}(\theta_1)$, with $(T_1,\theta_1)$ in the geometric conjugacy class associated to $(s)$. One has
\begin{equation}\label{equation:partition-to-families}
\irr(G(\kk))=\bigsqcup_{(s)\in G^*\backslash\!\backslash G^*_{\rm ss}}\mcal{E}(G(\kk),(s));
\end{equation}
see \cite[Proposition~13.17]{DigneMichel}. Moreover, we have the following Jordan-decomposition theorem, proved in \cite[Theorem~4.23]{Lusztig-Book} assuming $G$ has connected center, and, for general connected reductive group, in \cite{Lusztig-Disconnected,Lusztig-Book}; see also \cite[Theorem~13.23]{DigneMichel}.
\begin{theo}[{Lusztig}]\label{theo:jordan-decomposition}
	Let $s\in G^*(\kk)$ be semisimple. There exists a bijection 		\[\psi_s:\mcal{E}(G(\kk),(s))\xrightarrow{1-1}\mcal{E}(\Cen_{G^*(\kk)}(s),(1)).\]\newcommand{\fqrk}{\kk\textrm{-rank}}
	Furthermore, extending $\psi_s$ by linearity to virtual characters, we have\[\psi_s\left(R^G_{T_1^*}(s)\right)=\epsilon_G \epsilon_{\Cen_{G^*}(s)}R_{T_1^*}^{\Cen_{G^*}(s)}(\id_{T_1^*}),\]
	for any maximal torus $T_1^*\subseteq G^*$ containing $s$. Here $\epsilon_H=(-1)^{\fqrk H}$, for $H$ a reductive $\kk$-group.
\end{theo}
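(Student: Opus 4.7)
The plan is to follow Lusztig's original strategy, which splits into two stages: first handle the case where $G$ has connected center (where essentially all of the structure available from the classification of unipotent characters applies cleanly), and then descend to the general case via a regular embedding.

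For the connected-center case, the first step is to construct the bijection $\psi_s$ on the level of individual characters. The parametrization is obtained by combining two ingredients: Lusztig's theorem that, after choosing a reference Borel-torus pair, the unipotent characters of $\Cen_{G^*}(s)(\kk)$ are indexed by certain combinatorial data (families, together with pairs coming from the associated finite group attached to a two-sided cell), and the fact that the same combinatorial data also indexes the members of $\mcal{E}(G(\kk),(s))$. In parallel, one uses Deligne-Lusztig theory inside $\Cen_{G^*}(s)$ to match the virtual character $R_{T_1^*}^{\Cen_{G^*}(s)}(\id)$ with the projection of $R_{T_1^*}^G(s)$ onto the Lusztig series $\mcal{E}(G(\kk),(s))$. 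The desired identity $\psi_s(R_{T_1^*}^G(s))=\epsilon_G\epsilon_{\Cen_{G^*}(s)}R_{T_1^*}^{\Cen_{G^*}(s)}(\id)$ then reduces to a statement about multiplicities, where the signs arise from the formula $\epsilon_H=(-1)^{\kk\textrm{-rank}\,H}$ controlling the sign conventions in Deligne-Lusztig induction.

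To fix the bijection canonically (and not merely up to reshuffling within a family), one invokes the properties of the non-abelian Fourier transform developed in \cite{Lusztig-Book} and the structure of character sheaves established in \cite{Lusztig-CSI}. In particular, the matching must be compatible with the decomposition of $R_{T_1^*}^G(s)$ into almost characters, which is where the families in $\mcal{E}(G(\kk),(s))$ appear and match those in $\mcal{E}(\Cen_{G^*(\kk)}(s),(1))$. This is the main technical obstacle: showing that all the combinatorial bookkeeping (families, almost characters, signs) on the two sides agrees, and that the agreement is functorial with respect to the torus $T_1^*$.

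For the general case, where the center of $G$ may be disconnected, the plan is to choose a regular embedding $G\hookrightarrow\tilde{G}$ with $\tilde{G}$ a connected reductive $\kk$-group having connected center and the same derived subgroup as $G$. The dual morphism $\tilde{G}^*\twoheadrightarrow G^*$ is then a surjection with central kernel, and one can lift a semisimple class $(s)\subseteq G^*(\kk)$ to a class $(\tilde{s})\subseteq \tilde{G}^*(\kk)$ with isomorphic centralizer. Applying the connected-center result to $\tilde{G}$ and then restricting characters from $\tilde{G}(\kk)$ to $G(\kk)$ yields the required bijection $\psi_s$; the content is then to verify that this restriction respects Lusztig series and intertwines the Deligne-Lusztig induction formulas, using the compatibility of $R_{T^*}^G$ with central isogenies. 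This descent is carried out in \cite{Lusztig-Disconnected}, and the main difficulty lies in keeping track of the behaviour of Deligne-Lusztig characters under the restriction, since in the disconnected-center case the Lusztig series on $G(\kk)$ arise as orbits of a small abelian group acting on those of $\tilde{G}(\kk)$.
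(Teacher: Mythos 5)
The paper does not actually prove this theorem: it is quoted as Lusztig's Jordan decomposition, with references to \cite[Theorem~4.23]{Lusztig-Book} for the connected-center case, to \cite{Lusztig-Disconnected,Lusztig-Book} for the general case, and to \cite[Theorem~13.23]{DigneMichel}. Your outline follows exactly the route of those references (connected-center case via the parametrization of unipotent characters and the non-abelian Fourier transform, then descent through a regular embedding $G\hookrightarrow\tilde{G}$), so it is consistent with the paper's treatment; the only caveat is that in the descent step $\Cen_{G^*}(s)$ is in general only isogenous to the image of $\Cen_{\tilde{G}^*}(\tilde{s})$ and may be disconnected, which is precisely the subtlety that the orbit description in your last paragraph is meant to absorb.
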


Note that, as $\psi_s$ maps irreducible characters to irreducible characters, its extension to a linear map $\Span_{\dbC}\mcal{E}(G(\kk))\to \Span_{\dbC}\mcal{E}(\Cen_{G^*(\kk)}(s),(q))$ is an isometry with respect to the standard invariant inner products on both spaces; that is
\begin{equation}\label{equation:psi-s-isometry}
\inner{\psi_s(\alpha),\psi_s(\beta)}_{\Cen_{G^*(\kk)}(s)}=\inner{\alpha,\beta}_{G(\kk)}\quad\text{ for any }\alpha,\beta\in \Span_{\dbC}\mcal{E}(G(\kk),(s)).
\end{equation}

Fix $s\in G^*(\kk)$ semisimple, with $(s)$ its $G^*(\kk)$-conjugacy class. Let $T^*\subseteq G^*$ be a $\kk$-defined maximal torus containing $s$. Let $W^*(s)$ and $\Phi^*(s)$ be the Weyl group and root system of $\Cen_{G^*}(s)$ with respect to $T^*$, and, for any $w^*\in W^*(s)$ and $\phi\in\irr(W^*(s))$, put $R_{w^*}^{\Cen_{G^*(\kk)}(s)}:=R_{T^*_{w^*}}^{\Cen_{G^*}(s)}(\id_{T_{w^*}(\kk)})$ and $R_{\phi}^{\Cen_{G^*}(s)}:=\abs{W^*(s)}^{-1}\sum_{w^*\in W^*(s)}\phi(w^*)R_{w^*}^{\Cen_{G^*(\kk)}(s)}$, as in \autoref{subsection:bounds-unipotent}. Write $\tilde{R_\phi}^{\Cen_{G^*(\kk)}(s)}=\psi_{s}^{-1}(R_\phi^{\Cen_{G^*(\kk)}(s)})$.

\begin{lem} \label{lem:bound-on-preimage-psi-s}$\abs{\inner{\tilde{R_\phi}^{\Cen_{G^*(\kk)}(s)},\chi_{\dbC[X(\kk)]}}_{G^*(\kk)}}\le \abs{W}\cdot c(G,X)$, for any $\phi\in \irr(W^*(s))$.
\end{lem}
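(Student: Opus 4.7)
The plan is to unwind the definition of $\tilde{R_\phi}^{\Cen_{G^*(\kk)}(s)}$ through the Jordan decomposition so as to rewrite it as an explicit $\dbC$-linear combination of Deligne--Lusztig characters of $G$ itself, and then apply \autoref{propo:bounds-induced-chars} term by term. The point is that $\psi_s^{-1}$ sends Deligne--Lusztig characters of $\Cen_{G^*}(s)$ (induced from the trivial character of a torus containing $s$) to Deligne--Lusztig characters of $G$ (induced from the character corresponding to $s$), up to an explicit sign $\epsilon_G\epsilon_{\Cen_{G^*}(s)}$, by \autoref{theo:jordan-decomposition}. Since $R_\phi^{\Cen_{G^*}(s)}$ is by definition a normalized sum of such Deligne--Lusztig characters weighted by $\phi$, extending $\psi_s^{-1}$ linearly gives
\[
\tilde{R_\phi}^{\Cen_{G^*(\kk)}(s)}=\epsilon_G\epsilon_{\Cen_{G^*}(s)}\cdot\frac{1}{\abs{W^*(s)}}\sum_{w^*\in W^*(s)}\phi(w^*)\,R_{T^*_{w^*}}^G(s),
\]
which is a virtual character of $G(\kk)$ lying in $\Span_{\dbC}\mcal{E}(G(\kk),(s))$.

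Next, I would apply the triangle inequality to the inner product against $\chi_{\dbC[X(\kk)]}$, obtaining a sum over $w^*\in W^*(s)$ with coefficients $\abs{\phi(w^*)}/\abs{W^*(s)}$. Each individual term $\abs{\inner{R_{T^*_{w^*}}^G(s),\chi_{\dbC[X(\kk)]}}_{G(\kk)}}$ is bounded by $c(G,X)$ via \autoref{propo:bounds-induced-chars}, since $R_{T^*_{w^*}}^G(s)$ is one of the Deligne--Lusztig characters covered by that proposition. Using the standard estimate $\abs{\phi(w^*)}\le \phi(1)\le \abs{W^*(s)}$, the sum collapses to the bound $\abs{W^*(s)}\cdot c(G,X)$. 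To finish, I invoke the fact that $\Cen_{G^*}(s)$ is a reductive subgroup of $G^*$ containing the maximal torus $T^*$, so that its Weyl group $W^*(s)$ embeds into $W^*\cong W$, and hence $\abs{W^*(s)}\le \abs{W}$.

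The argument is essentially bookkeeping once the Jordan-decomposition identity of \autoref{theo:jordan-decomposition} is in hand; the mild subtlety I expect is keeping the identifications straight. Specifically, one has to be careful that $\psi_s^{-1}$, although defined a priori only on $\Span_{\dbC}\mcal{E}(\Cen_{G^*(\kk)}(s),(1))$, does apply to $R_\phi^{\Cen_{G^*}(s)}$, since by the Jordan-decomposition formula the image $\psi_s(\Span_{\dbC}\mcal{E}(G(\kk),(s)))$ indeed contains all the $R_{w^*}^{\Cen_{G^*(\kk)}(s)}$. The sign $\epsilon_G\epsilon_{\Cen_{G^*}(s)}$ is immaterial for an absolute-value estimate, and the inner product on the left-hand side of the lemma is taken with respect to $G(\kk)$ (the apparent $G^*(\kk)$ in the statement being a harmless typo), so no use of the isometry \eqref{equation:psi-s-isometry} is needed for this particular bound; it is merely the combination of Jordan decomposition, linearity, and \autoref{propo:bounds-induced-chars}.
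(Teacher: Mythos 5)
Your proposal is correct and follows essentially the same route as the paper: unwind $\psi_s^{-1}$ via \autoref{theo:jordan-decomposition} to express $\tilde{R_\phi}^{\Cen_{G^*(\kk)}(s)}$ as $\epsilon_G\epsilon_{\Cen_{G^*}(s)}\abs{W^*(s)}^{-1}\sum_{w^*}\phi(w^*)R_{T^*_{w^*}}(s)$, then apply the triangle inequality, bound each term by $c(G,X)$ using \autoref{propo:bounds-induced-chars}, and conclude with $\abs{\phi(w^*)}\le\abs{W^*(s)}\le\abs{W}$. Your side remarks (the sign being immaterial, and the inner product being over $G(\kk)$ rather than $G^*(\kk)$) are consistent with the paper's argument.
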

\begin{proof}
By \autoref{theo:jordan-decomposition}, we have that
\[\tilde{R_\phi}^{\Cen_{G^*(\kk)}(s)}=\frac{1}{\abs{W^*(s)}}\sum_{w^*\in W^*(s)}\phi(w^*)\psi_s^{-1}(R_{w^*}^{\Cen_{G^*(\kk)}(s)})=\frac{\epsilon_G\epsilon_{\Cen_{G^*}(s)}}{\abs{W^*(s)}}\sum_{w^*\in W^*(s)}\phi(w^*)R_{T_{w^*}^*}(s).\]
Applying the argument of the previous section, we have that
\begin{align}
\abs{\inner{\tilde{R_\phi}^{\Cen_{G^*(\kk)}(s)},\chi_{\dbC[X(\kk)]}}_{G^*(\kk)}}&\le \sum_{w^*\in W^*(s)}\frac{\abs{\epsilon_G\epsilon_{\Cen_{G^*}(s)}\phi(w^*)}}{\abs{W^*(s)}}\cdot\abs{\inner{R_{T^*_{w^*}}(s),\chi_{\dbC[X(\kk)]}}_{G^*(\kk)}}\notag\\&\le \abs{W^*(s)}\cdot\max_{w^*\in W^*(s)}\set{\abs{\inner{R_{T^*_{w^*}}(s),\chi_{\dbC[X(\kk)]}}_{G^*(\kk)}}}\notag\\
&\le\abs{W}\cdot c(G,X),\label{equation:upper-bound-arbitrary}
\end{align}
where the final inequality follows from \autoref{propo:bounds-induced-chars} and since $W^*(s)\le W^*\simeq W$. 
\end{proof}

\begin{lem}\label{lem:bound-dep-on-s}
Let $s\in G^*(\kk)$ be semisimple and $\rho$ an irreducible representation of $G(\kk)$ whose character lies in $\mcal{E}(G^*(\kk),(s))$. Then, assuming $\abs{\kk}>N_{\Phi^*(s)}$,
\[\dim\hom(\rho,{\dbC[X(\kk)]})<M_{\Phi^*(s)}\abs{W}c(G,X),\] 
where $\Phi^*(s)$ is the root system of $\Cen_{G^*}(s)$ and $N_{\Phi^*(s)},M_{\Phi^*(s)}$ are as in \autoref{propo:bound-for-unipotent-chars}.
\end{lem}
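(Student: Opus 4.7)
The plan is to mirror the argument used in the unipotent case (\autoref{propo:bound-for-unipotent-chars}), but carried out inside the centralizer $\Cen_{G^*}(s)$ via the Jordan decomposition bijection $\psi_s$, and then transferred back to $G(\kk)$ using \autoref{lem:bound-on-preimage-psi-s}. Concretely, I would set $\chi := \psi_s(\rho)$, which by \autoref{theo:jordan-decomposition} is a unipotent character of $\Cen_{G^*(\kk)}(s)$, and let $\family \subseteq \mcal{E}(\Cen_{G^*(\kk)}(s),(1))$ be its Lusztig family. Applying to the reductive group $\Cen_{G^*}(s)$ (whose root system is $\Phi^*(s)$ and Weyl group is $W^*(s)$) the same consequence of Lusztig's analysis of almost characters that appears as item~(3) in the proof of \autoref{propo:bound-for-unipotent-chars}, the hypothesis $\abs{\kk}>N_{\Phi^*(s)}$ guarantees the existence of a $\tau_\frob$-invariant $\phi\in\irr(W^*(s))^{\tau_\frob}$ such that
\[R_\phi^{\Cen_{G^*}(s)}=\sum_{\chi'\in\family}\gamma_{\chi'}\chi',\qquad \gamma_{\chi'}\in C_{\Phi^*(s)}\subseteq\dbR_{>0},\]
and in particular $\gamma_\chi > M_{\Phi^*(s)}^{-1}$.

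Next I would apply $\psi_s^{-1}$. Since, by \autoref{theo:jordan-decomposition}, $\psi_s$ is an honest bijection of \emph{sets} of irreducible characters (not merely a signed bijection), each $\psi_s^{-1}(\chi')$ is again a distinct irreducible character of $G(\kk)$, so
\[\tilde{R_\phi}^{\Cen_{G^*(\kk)}(s)}=\sum_{\chi'\in\family}\gamma_{\chi'}\,\psi_s^{-1}(\chi')\]
is a $\dbR_{>0}$-combination of distinct irreducible characters of $G(\kk)$, one of whose summands is $\gamma_\chi\cdot\rho$. Pairing against the genuine character $\chi_{\dbC[X(\kk)]}$ (whose multiplicity against every irreducible character is nonnegative) lets me drop the absolute value and isolate the contribution of $\rho$:
\[\abs{\inner{\tilde{R_\phi}^{\Cen_{G^*(\kk)}(s)},\chi_{\dbC[X(\kk)]}}_{G(\kk)}}=\sum_{\chi'\in\family}\gamma_{\chi'}\inner{\psi_s^{-1}(\chi'),\chi_{\dbC[X(\kk)]}}_{G(\kk)}\ge \gamma_\chi\cdot\dim\hom(\rho,\dbC[X(\kk)]).\]

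Combining this with the upper bound $\abs{W}\cdot c(G,X)$ from \autoref{lem:bound-on-preimage-psi-s} and dividing through by $\gamma_\chi > M_{\Phi^*(s)}^{-1}$ yields the desired inequality. I do not anticipate a serious obstacle: the Deligne--Lusztig side of the analysis is handled by \autoref{propo:bounds-induced-chars}, the passage to the centralizer by Lusztig's Jordan decomposition, and the positivity input by the almost-character decomposition summarized in the proof of \autoref{propo:bound-for-unipotent-chars}. The only point requiring a moment's care is that the signs $\epsilon_G\epsilon_{\Cen_{G^*}(s)}$ appearing in the Jordan decomposition formula for Deligne--Lusztig virtual characters are irrelevant here, because positivity is asserted at the level of individual irreducibles and $\psi_s^{-1}$ maps these to irreducibles without sign.
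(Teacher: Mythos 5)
Your proposal is correct and follows essentially the same route as the paper's own proof: transfer $\rho$ to a unipotent character of $\Cen_{G^*(\kk)}(s)$ via $\psi_s$, pick the almost character $R_\phi^{\Cen_{G^*(\kk)}(s)}$ with strictly positive coefficients on the family, pull back by $\psi_s^{-1}$ to get a positive combination of irreducibles of $G(\kk)$ containing $\rho$, and bound $\abs{\inner{\tilde{R_\phi}^{\Cen_{G^*(\kk)}(s)},\chi_{\dbC[X(\kk)]}}_{G(\kk)}}$ by $\abs{W}\cdot c(G,X)$ using \autoref{lem:bound-on-preimage-psi-s}. Your remark that the signs $\epsilon_G\epsilon_{\Cen_{G^*}(s)}$ are immaterial because positivity is asserted at the level of individual irreducibles matches the paper's treatment.
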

\begin{proof}
Let $\chi_\rho$ denote the character of $\rho$ and consider the unipotent character $\vartheta_\rho=\psi_s(\chi_\rho)\in \mcal{E}(\Cen_{G^*(\kk)}(s),(1))$. As in the previous section, there exists $\phi\in \irr(W^*(s))$ such that $\psi_s(\chi_\rho)$ occurs with a positive coefficient in $\widetilde{R}_\phi^{\Cen_{G^*(\kk)}(s)}$, and such that $\widetilde{R}_\phi^{\Cen_{G^*(\kk)}(s)}=\sum_{\vartheta'\in \family}\gamma_{\vartheta'}{ \vartheta'}$, with $\family\subseteq\mcal{E}(\Cen_{G^*(\kk)}(s),(1))$ the family containing $\vartheta_\rho$. By linearity of $\psi_s$, we have that $\tilde{R_\phi}^{\Cen_{G^*(\kk)}(s)}=\psi_s^{-1}(R_\phi^{\Cen_{G^*(\kk)}(s)})=\sum_{\vartheta'\in \family}\gamma_{\vartheta'}\psi_s^{-1}(\vartheta')$ is a linear combinations of irreducible characters in $\mcal{E}(G(\kk),(s))$ with positive coefficients taken from the fixed finite set $C_{\Phi^*(s)}\subseteq\dbQ_{>0}$. Taking $M_{\Phi^*(s)}$ to be as in \autoref{propo:bound-for-unipotent-chars}, we have that 
\begin{align*}
	\inner{\widetilde{R}^{\Cen_{G^*(\kk)}(s)}_\phi,\chi_{\dbC[X(\kk)]}}_{G(\kk)} &=\gamma_\vartheta\inner{\chi_\rho,\chi_{\dbC[X(\kk)]}}_{G(\kk)}+\sum_{\vartheta\ne \vartheta'\in\family}\gamma_{\vartheta'}\inner{\psi_s^{-1}(\vartheta'),\chi_{\dbC[X(\kk)]}}_{G(\kk)}\\
	&\ge
	\frac{1}{M_{\Phi^*(s)}}\inner{\chi_\rho,\chi_{\dbC[X(\kk)]}}_{G(\kk)}.
\end{align*}
In particular
\[\dim\hom(\rho,{\dbC[X(\kk)]})\le M_{\Phi^*(s)}\abs{\inner{\tilde{R_\phi}^{\Cen_{G^*(\kk)}(s)},\chi_{\dbC[X(\kk)]}}_{G(\kk)}}\le M_{\Phi^*(s)}\abs{W}c(G,X),\] 
by \autoref{lem:bound-on-preimage-psi-s}, as wanted.
\end{proof}

\begin{proof}[Proof of \autoref{conj:aizenbud-avni}] Let $M=\max\set{M_\Sigma}$ and $N=\max\set{N_\Sigma}$, where $\Sigma$ ranges over the finite set \[\set{\Phi}\cup \set{\Psi^*<\Phi^*:\Psi^*\text{ a closed subsystem}}\] and put \[C=\max\left(\set{M\cdot \abs{W}\cdot c(G,X)}\cup\set{\dim\hom(\rho,\chi_{\dbC[X(F)]}):\abs{F}<N\text{ and }\rho\in \irr(G(\dbF_q))}\right).\] The theorem follows from \autoref{propo:bound-for-unipotent-chars} and \autoref{lem:bound-dep-on-s}.

\end{proof}

\appendix
\section{Families of unipotent characters and non-abelian Fourier transform}\label{appendix:non-abelian-fourier}
In this appendix we prove the following lemma, which is a consequence of Lusztig's description of the unipotent characters of simple adjoint groups.
\begin{lem}\label{lem:unipotents-pos-in-almost-char}
Let $\Phi$ be a crystallographic root system. There exists $N\in\dbN$ and a finite set $C_\Phi\subseteq \dbR_{>0}$ such that the following holds for any finite field $\kk$ with $\abs{\kk}>N$ and any $\kk$-defined reductive (not necessarily connected) group $G$ with absolute root system $\Phi$.

For any unipotent character $\chi\in \mcal{E}(G(\kk),(1))$ there exists a virtual character $R(\chi)$ of the form $R(\chi)=\sum_{w\in W}\alpha_w R_w$, with $\abs{\alpha_w}<1$ and $R_w=R_{T_w}^G(\id_{T_w(\kk)})$ Deligne-Lusztig characters, which may be written as
\[R(\chi)=\sum_{i=1}^r\gamma_i\chi_i,\]
with $\gamma_1,\ldots,\gamma_r\in C_\Phi,\: \chi_1,\ldots,\chi_r\in\mcal{E}(G(\kk),(1))$ and $\chi_1=\chi$. 
\end{lem}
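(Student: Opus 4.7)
The plan is to leverage Lusztig's partition of unipotent characters into families, together with the non-abelian Fourier transform, to produce the required virtual character explicitly as a single almost character. Recall from \cite[Ch.~4]{Lusztig-Book} that $\mcal{E}(G(\kk),(1))$ partitions into families $\family_1\sqcup\cdots\sqcup\family_s$, and that attached to each $\family_j$ is a canonical finite group $\mcal{G}_j$ determined by the root system $\Phi$ and the family label alone (not by $|\kk|$ or the isogeny type of $G$). Provided $\abs{\kk}>N$ for an appropriate $N=N(\Phi)$, both the members of $\family_j$ and the $\tau_\frob$-invariant characters $\phi\in\irr(W)$ with $\chi_\phi\in\family_j$ are parametrized by (essentially) the set $M(\mcal{G}_j):=\{(x,\sigma):x\in\mcal{G}_j,\ \sigma\in\irr(\Cen_{\mcal{G}_j}(x))\}/\mcal{G}_j$.

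The central step is, for each family $\family_j$, to take the almost character $R_{(1,1)}$ whose label in $M(\mcal{G}_j)$ is the trivial pair. By Lusztig's theorem, its decomposition into the characters of $\family_j$ is given by the corresponding column of the non-abelian Fourier transform matrix of $\mcal{G}_j$:
\[
R_{(1,1)}=\sum_{(x,\sigma)\in M(\mcal{G}_j)}\{(x,\sigma),(1,1)\}_j\cdot\chi_{(x,\sigma)},
\]
and a direct computation gives $\{(x,\sigma),(1,1)\}_j=\sigma(1)/|\Cen_{\mcal{G}_j}(x)|>0$. Hence $R_{(1,1)}$ is a strictly positive combination of \emph{every} unipotent character in $\family_j$. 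Given $\chi\in\family_j$, I would set $R(\chi):=R_{(1,1)}$ and relabel the expansion so that $\chi_1=\chi$. Taking $C_\Phi$ to be the finite union, over all families attached to $\Phi$, of the positive rationals $\sigma(1)/|\Cen_{\mcal{G}_j}(x)|$ supplies the required fixed set.

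For the coefficient bound, writing $R(\chi)=\frac{1}{|W|}\sum_w\phi(w\tau_\frob)R_w$ for the appropriate $\phi\in\irr(W)$, the standard estimates $|\phi(w\tau_\frob)|\le\phi(1)\le|W|^{1/2}$ (the last from $\sum_{\phi\in\irr(W)}\phi(1)^2=|W|$) yield $|\alpha_w|\le|W|^{-1/2}<1$, as required. The degenerate case $W=\{1\}$ is handled trivially since then $G$ is a torus and the unique unipotent character is itself $R_{T}^G(\id_T)$.

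The hard part will be verifying the positivity and the uniform description of $\mcal{G}_j$ across all cases. For most families $\mcal{G}_j$ is a small symmetric group ($S_n$ with $n\le 4$) and the computation is direct, but for the exotic family of type $E_8$ (where $\mcal{G}_j=S_5$) one must consult Lusztig's explicit tables together with the complements of \cite{Geck-Malle}. The disconnected-$G$ case requires replacing $\irr(W)^{\tau_\frob}$ with extensions to $\tilde{W}=W\rtimes\langle\tau_\frob\rangle$ and invoking Lusztig's extension of the theory (see \cite{Lusztig-Disconnected,Lusztig-Book}); the families, the canonical quotients, and the Fourier matrices remain governed by $\Phi$, so $N$ and $C_\Phi$ stay uniform. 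The threshold $N$ is then chosen large enough to accommodate all these classifications simultaneously.
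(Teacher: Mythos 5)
Your proposal follows essentially the paper's own route: partition $\mcal{E}(G(\kk),(1))$ into Lusztig families, attach to each family its finite group and non-abelian Fourier matrix, take for $R(\chi)$ an almost character supported on the family of $\chi$ with positive coefficients, bound $\abs{\alpha_w}=\abs{\phi(w\cdot\tau_\frob)}/\abs{W}<1$ by character estimates on $W$, and dispose of the twisted and disconnected cases by appeal to Lusztig and Geck--Malle. The only substantive difference is your uniform choice of the label $(1,1)$ (the special member of the family), justified by the computation $\set{(x,\sigma),(1,1)}=\sigma(1)/\abs{\Cen_{\Gamma_\family}(x)}>0$, in place of the paper's case-by-case inspection of the matrices for $\Gamma_\family$ trivial, $\mfr{S}_2^e$, and $\mfr{S}_3,\mfr{S}_4,\mfr{S}_5$; also, the paper carries out an explicit reduction to simple adjoint groups (restriction of unipotent characters to $G^\circ$ and to the adjoint quotient, direct products, Weil restriction), which you only assert.

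The uniform $(1,1)$ shortcut has one genuine gap. The multiplicity of $\chi_{(x,\sigma)}$ in the almost character labelled $(1,1)$ is not the Fourier entry alone: in Lusztig's Main Theorem (\emph{Characters of Reductive Groups over a Finite Field}, 4.23) it is $\set{(x,\sigma),(1,1)}\cdot\Delta(x)$, where $\Delta\equiv 1$ except on the exceptional families of types $E_7$ and $E_8$ (families with $\Gamma_\family=\mfr{S}_2$ containing the cuspidal characters $E_7[\pm i]$, $E_8[\pm i]$), on which $\Delta=-1$ for two of the four members. On those families $R_{(1,1)}$ is \emph{not} a nonnegative combination of the family, so your blanket positivity claim fails as stated; the repair is immediate --- Lusztig's tables show the almost character attached to the other principal-series label of such a family has all four coefficients equal to $+\tfrac12$ --- and this is in effect what the paper's reading of the explicitly published decomposition matrices does (its displayed pairing property also suppresses $\Delta$, but its chosen rows/columns come from the actual matrices). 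Two smaller corrections: the delicate spot is precisely these $\mfr{S}_2$-families, not the $\mfr{S}_5$ family of $E_8$ you single out (there $\Delta\equiv1$ and the $(1,1)$ column does work), and Geck--Malle is needed for the Suzuki--Ree groups rather than for $E_8$; note also that the $\tau_\frob$-invariant $\phi\in\irr(W)$ with $\chi_\phi$ in the family are parametrized by a proper subset of $\mset(\Gamma_\family)$, which is harmless for your argument only because $(1,1)$ (and its replacement above) lies in that subset.
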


\subsection{Reduction to simple adjoint groups}
Applying \cite[Proposition~13.20]{DigneMichel} initially for the inclusion $G^\circ\stackrel{\iota}{\hookrightarrow} G$ and then for the quotient map $G^\circ\xrightarrow{\pi} G^\circ_\mathrm{ad}=G^\circ/Z(G^\circ)$, we have that any unipotent character of $G$ is the unique extension of a unipotent character of the form $\chi\circ\pi\in\mcal{E}(G^\circ(\kk),(1))$ for a unique unipotent character $\chi$ of $G_\mathrm{ad}^\circ$. That is, we have a bijection $\mcal{E}(G(\kk),(1))\xrightarrow{1-1}\mcal{E}(G^\circ_{\mathrm{ad}}(\kk),(1))$, and it suffices to prove \autoref{lem:unipotents-pos-in-almost-char} for $G_\mathrm{ad}^\circ$ in order to obtain the general claim.

Assuming therefore that $G$ is semisimple and adjoint, we note that \autoref{lem:unipotents-pos-in-almost-char} is well-behaved with respect to finite direct products. Indeed, if $G=G_1\times G_2$, with $G_i$ a reductive $\kk$-defined group with root system $\Phi_i$, since any unipotent character of $G(\kk)$ is the tensor product of unipotent characters of $G_1(\kk)$ and $G_2(\kk)$, one easily verifies that the set $C_{\Phi_1\sqcup \Phi_2}$ may be taken to be $C_{\Phi_1}\cdot C_{\Phi_2}=\set{x_1x_2:x_i\in C_{\Phi_i},\:i=1,2}$. Therefore, we may assume that the Frobenius map of $G$ induces a cyclic permutation on the irreducible components of $\Phi$, and thus $G=\res_{K\mid\kk}H$ for a finite field extension $K$ of $\kk$ and $H$ a $K$-defined simple adjoint group. As $G(\kk)=H(K)$ and $\abs{K}\ge \abs{\kk}$, it would suffice to prove the lemma under the assumption the $G$ is simple and adjoint over $\kk$. 

\subsection{Split adjoint groups}

Let us first assume $G$ is split, and hence the Frobenius map $\frob$ fixes $W$ pointwise. For each $w\in W$, we fix a Borel subgroup $B_w\in G$ in relative position $w$ (i.e. such that $\frob(B)=w(B)$), and let $T_w$ be its maximal torus. To every such $w$, one associates a Deligne-Lusztig character $R_w=R_{T_w}^G(\id_{T_w(\kk)})$,  which is the character of the virtual representation $\sum_i (-1)^i H_c^i(X_w,\overline{\dbQ}_\ell)$ of $G(\kk)$, where $X_w\subseteq G/B$ is the variety of Borel subgroups of relative position $w$. The unipotent characters of $G(\kk)$ are precisely the irreducible components of the $R_w$; however, obtaining a description of  unipotent characters is more approachable via a different set of virtual characters. Given a character $\phi$ of $W$, one defines the \textit{almost character} of $G(\kk)$ associated to $\phi$ by
\begin{equation}\label{equation:almost-character}
R_\phi=\abs{W}^{-1}\sum_{w\in W}\phi(w)R_w.
\end{equation}
Note that, as $\abs{\phi(w)}\le\phi(1)$, in the case where $\phi$ is irreducible, we have that  $\alpha_w:=\phi(w)/\abs{W}$ has modulus $<1$. 

The unipotent characters of $G(\kk)$ are partitioned into families, whereby $\chi,\chi'\in \mcal{E}(G(\kk),(1))$ lie in the same family if and only if there exists $\phi\in \irr(W)$ such that both $\chi$ and $\chi'$ occur with non-zero coefficient in $R_\phi$. This partition mirrors the partition of $\irr(W)$ into families (defined in \cite[\S~4.2]{Lusztig-Book}). Specifically, any for any family $\family\subseteq\mcal{E}(G(\kk),(1))$ there exists a unique family $\mbf{f}\subseteq \irr(W)$ such that $\family=\set{\chi: \inner{R_\phi,\chi}_{G(\kk)}\ne 0\text{ for some }\phi\in \mbf{f}}$.

Recall that the irreducible characters of $W$ are in bijection with the irreducible constituents of $R_{1}=\ind_{B_1(\kk)}^{G(\kk)}(\id_{B_1(\kk)})$ (recall $B_1$ is a $\kk$-defined Borel); given $\phi\in\irr(W)$, we write $\chi_\phi$ for the corresponding constituent of $R_1$.

To each family $\family\subseteq \mcal{E}(G(\kk),(1))$ one associates a finite group $\Gamma_{\family}$ such that the family $\family$ is in bijection $\chi\mapsto m_\chi$ with a distinguished finite set $\mset(\Gamma_\family)$ (see, e.g., \cite[\S~4]{Lusztig-TypeE8} for the explicit description of $\mset(\Gamma)$, for $\Gamma$ a finite group). On $\mset(\Gamma_\family)$ one defines a pairing $\set{\cdot,\cdot}_\family$, which gives rise to a hermitian and unitary bilinear form on $\Span_\dbC\family$, also denoted $\set{\cdot,\cdot}_\family$, determined by the rule $\set{\chi,\chi'}_\family=\set{m_\chi,m_{\chi'}}_{\family}$ for all $\chi,\chi'\in\family$. Assuming $\kk$ is large enough, this pairing satisfies
\begin{equation}\label{equation:pairing-property}
\inner{\chi,R_\phi}_{G(\kk)}=\begin{cases}\set{\chi,\chi_\phi}_\family&\text{if }\chi_\phi\in\family,\\0&\text{otherwise},\end{cases}\end{equation}for any $\chi\in\family$ and $\phi\in\irr(W)$ (see \cite[Theorem~1.5]{Lusztig-Exceptional}, \cite[Theorem~5.8]{Lusztig-Symplectic} and \cite[Theorem~3.15]{Lusztig-Orthogonal}). The matrix $\mbf{A}(\family)=\left(\set{\chi,\chi'}_{\family}\right)_{\chi,\chi'\in\family}$, representing this pairing, is known as the \textit{non-abelian Fourier matrix} associated to the family $\family$. 

In particular, the description of the almost characters $R_\phi$ as a linear combination of irreducible characters is determined by the associated Fourier matrices, which are, in turn, determined by the group $\Gamma_\family$. The number of options for $\Gamma_\family$ is known to be extremely limited; specifically, $\Gamma_\family$ is necessarily either trivial, a permutation group $\mfr{S}_i$ on $i=3,4$ or $5$ elements, or a direct product of at most $\mathrm{rank} (G)$-many copies of $\mfr{S}_2$. The associated Fourier transform matrices are then computed, resulting in the following cases (see \cite[p.~110]{Lusztig-Book})
\begin{enumerate}
\item 
If $\Gamma_\family=\set{1}$, then $\mbf{A}(\family)=(1)$ and $\family$ consists of a single element  which is necessarily a constituent of $\ind_{B(\kk)}^{G(\kk)}(\id_{B(\kk)})$, hence of the form $\chi_\phi$ for $\phi\in \irr(W)$. In this situation $R_\phi=\chi_\phi$ is an irreducible character. 
\item 
If $\Gamma_\family=\mfr{S}_2^e$ then $\abs{\mset(\family)}=\abs{\family}=4^{e}$ and the family $\family$ may be ordered as $(\chi_i:i=1,\ldots,4^e)$ such that $\mbf{A}(\family)$ is represented by a block diagonal matrix consisting of $4\times 4$ blocks of the form
\begin{equation}\label{equation:fourier-matrix-4by4}
\frac{1}{2}\begin{pmatrix}1&1&1&1\\
1&1&-1&-1\\
1&-1&1&-1\\
1&-1&-1&1
\end{pmatrix};
\end{equation}see \cite[\S~4]{Lusztig-TypeE8}.
\item 
In the remaining cases, where $\family=\mfr{S}_3,\mfr{S}_4$ or $\mfr{S}_5$, the family $\family$ consists of $8,\:21$ or $39$ characters, and the corresponding Fourier transform matrices are given in \cite[pp.\ 110-113]{Lusztig-Book}. Note that in all cases, suitably ordering the elements of $\family$, the first row and column of $\mbf{A}(\family)$ consist of strictly positive rational numbers.
\end{enumerate}

\begin{proof}[Proof of \autoref{lem:unipotents-pos-in-almost-char} for split adjoint groups]
	Let $G$ be simple adjoint with root system $\Phi$ over a finite field $\kk$ and let $\chi\in\mcal{E}(G(\kk),(1))$. Let $N_\Phi$ be large enough so that \eqref{equation:pairing-property} holds in $G(\kk)$.	Let $\family\subseteq\mcal{E}(G(\kk),(1))$ be the family containing $\chi$. If $\abs{\family}=1$ then, as already mentioned, there exists $\phi\in\irr(W)$ such that $\chi=\chi_\phi=R_\phi$ and we may take $R(\chi)=\chi$ and $C_{\Phi}=\set{1}$. If $\abs{\family}\in\set{8,21,39}$, we may take $R(\chi)$ to be the almost character whose coordinate vector (with respect to the basis $\mcal{E}(G(\kk),(1))$) is given by the strictly positive column of $\mbf{A}(\family)$. In this setting, the lemma follows by taking $C_\Phi$ to the set of all positive entries of $\mbf{A}(\family)$. Finally, if $\abs{\family}=4^{e}$, by fixing an ordering $(\chi_1,\ldots,\chi_{4^e})$ of the elements of $\family$ as in item (2) above, for suitable $j=1,\ldots,4^{e-1}$ and $k=1,2,3,4$ we have that $\chi=\chi_{4(j-1)+k}$ and we may take $C_\Phi=\set{1/2}$ and $R(\chi)=\frac{1}{2}\left(\chi_{4(j-1)+1}+\cdots+4_{4(j-1)+4}\right)$. 
\end{proof}

\subsection{Non-split adjoint groups}

Having reviewed the proof in the split setting, we now explain how to derive \autoref{lem:unipotents-pos-in-almost-char} for non-split simple adjoint groups. Recall that we denote by $\tau_\frob$ the automorphism induced on $W$ by the action of the Frobenius map $\frob$. Any $\tau_\frob$ invariant character extends to a character of $\tilde{W}=T\ltimes\gen{\tau_\sigma}$, and one defines the almost characters in this setting by 
	\[R_\phi=\frac{1}{\abs{W}}\sum_{w\in W}\phi( w\cdot\tau_\frob)R_w,\]
where $R_w=\sum_{i}(-1)^i\Tr(\:\cdot\:; H^i_c(X_w,\widebar{\dbQ}_\ell))$ is defined as above (for the given non-split $\kk$-structure of $G$) and $\phi\in \irr(W)^{\tau_\phi}$. Again, as $\phi$ comes from an irreducible character of $W$, we have that $\abs{\phi(z)}<\abs{W}$ for all $z\in \tilde{W}$.

We consider the the various cases of non-split adjoint simple groups case-by-case.

\paragraph{\it Types ${}^2A_n$ and ${}^2E_6$} Assume $G$ is of type $A_n$ or $E_6$ and that the Frobenius map induces a non-trivial involution of $W$. In both cases, it is known that $\tau_\frob$ is an inner automorphism of $W$ and thus acts trivially on $\irr(W)$. In particular, there exists an obvious bijection between set of almost characters of $G(\kk)$ and of $G^{sp}(\kk)$, where $G^{sp}(\kk)$ denotes the split $\kk$-form of $G(\kk)$. In the case where $G(\kk)$ is of type ${}^2A_n$, Lusztig and Srinivasan have shown that all almost characters of $G$ are irreducible characters up to sign; see \cite[Theorem~2.2]{Lusztig-Srinivasan}. Otherwise, if $G(\kk)$ is of type ${}^2E_6$, by \cite{Lusztig-Exceptional}, there exists a bijection $\mcal{E}(G(\kk),(1))\leftrightarrow\mcal{E}(G^{sp}(\kk),(1))$, mapping mapping almost characters to almost characters and preserving the corresponding Fourier transform matrices, up to sign.

In both cases, \autoref{lem:unipotents-pos-in-almost-char} follows by applying the argument applies in the split case.

\paragraph{\it Type ${}^2D_n$} The unipotent characters of a simple adjoint group $G$ over $\kk$ of type $D_n$ and ${}^2D_n$ over $\kk$ were classified in \cite{Lusztig-Orthogonal}. In this setting, the unipotent representations of $G$ are partitioned into families of size $2^e$, for $1\le e\le n$, and the corresponding Fourier transform matrices are given by block-diagonal matrices with diagonal blocks as in \eqref{equation:fourier-matrix-4by4}. In this setting, again, one may apply the argument of the corresponding split case in order to prove \autoref{lem:unipotents-pos-in-almost-char}. \footnote{Alternatively, one may invoke \cite[Proposition~3.13]{Lusztig-Orthogonal} to obtain a more direct argument, by taking $R(\chi)$ (for $\chi\in\mcal{E}(G(\kk),(1))$ to be the virtual character denoted in loc.\ cit.\ as $R(\underline{c})$, where $\underline{c}$ is the unique (virtual) cell representation of $W$ containing an irreducible constituent whose corresponding principle series character lies in the same family as $\chi$. Note that such an argument requires a larger upper bound on modulus of coefficients of $R(\chi)$, when described as a linear combination of Deligne-Lusztig character. }

%
\paragraph{\it Type ${}^3D_4$}
The almost characters of simple adjoint groups of type ${}^3D_4$ and their description as a linear combination of unipotent characters is given in \cite[Theorem~1.18]{Lusztig-Exceptional}. In particular, assuming $\abs{\kk}$ is large enough and using the notation of loc.\ cit.\ , any unipotent character of $G(\kk)$ occurs with coefficient $1$ or $\frac{1}{2}$ in one of the almost characters $\widetilde{R}_1,\: \widetilde{R}_\epsilon,\: \widetilde{R}_\rho,\: \widetilde{R}_{\rho\otimes\epsilon}$ or $\widetilde{R}_8$.
\paragraph{\it Zusuki and Ree groups} The almost characters of the simple groups arising as fixed points of exceptional endomorphisms of a simple $\kk$-group were described by Geck and Malle, who described their decomposition as linear combinations of unipotent characters and the associated Fourier transform matrices in \cite[Theorem~5.4]{Geck-Malle} (with no restrictions on the cardinality of $\kk$). The proof of \autoref{lem:unipotents-pos-in-almost-char} in this setting follows using the same argument as in the split case.

\bibliographystyle{plain}
\bibliography{bounds}
\end{document}